\DeclarePairedDelimiter \floor {\lfloor} {\rfloor}
\definecolor{Magenta}{cmyk}{0,1,0,0}
\definecolor{dgreen}{rgb}{0.,0.6,0.}
\newtheorem{theorem}{Theorem}[section]
\newtheorem{proposition}{Proposition}[section]
\newtheorem{conjecture}{Conjecture}[section]
\newtheorem*{definition*}{Definition}
\newtheorem*{theorem*}{Theorem}
\newtheorem*{remark*}{Remark}
\newtheorem*{problem*}{Problem}
\newtheorem*{conjecture*}{Conjecture}
\newtheorem{lemma}{Lemma}[section]
\newcommand*{\De}{\Delta}
\newcommand*{\mkl}{M_{k,\ell}}
\newcommand*{\dkl}{\De_{k,\ell}}
\newcommand*{\akl}{A_{k,\ell}}
\newcommand*{\gkl}{G_{k,\ell}}
\newcommand*{\hkl}{H_{k,\ell}}
\newcommand*{\pkl}{P_{k,\ell}}
\newcommand*{\fkl}{f_{k,\ell}}
\newcommand*{\lp}{\left(}
\newcommand*{\rp}{\right)}
\newcommand*{\de}{\delta}
\renewcommand*{\th}{\theta}
\newcommand*{\vt}[1]{\left\lvert #1 \right\rvert}
\newcommand*{\abs}[1]{\lvert#1\rvert}
\begin{document}
	
	\title[Zeros of a variant of Ramanujan polynomials]{Variant of Ramanujan polynomials and \\ a conjecture of Maji \& Sarkar}
	
	\author[M. Charan]{Mrityunjoy Charan}
	
	\author[J. Meher]{Jaban Meher}
	
	\author[S. Pathak]{Siddhi Pathak}
	
	\address{(M. Charan) The Institute of Mathematical Sciences, A CI of Homi Bhabha National Institute, CIT Campus, Taramani, Chennai, Tamil Nadu, India 600113.}
	
	\email{mcharan@imsc.res.in}
	
	\address{(J. Meher) School of Mathematical Sciences, National Institute of Science Education and Research, Bhubaneswar, An OCC of Homi Bhabha National Institute, P. O. Jatni, Khurda, Odisha, India 752050.}

	\email{jaban@niser.ac.in}

	\address{(S. Pathak) Chennai Mathematical Institute, H-1 SIPCOT IT Park, Siruseri, Kelambakkam, Tamil Nadu, India 603103.}

	\email{siddhi@cmi.ac.in}
	
	\date{\today}

	\subjclass[2020]{26C10, 11C08}

	\keywords{Ramanujan formula for odd zeta values, Ramanujan polynomials, unimodularity of zeros}
	
	\thanks{Research of the third author was partially supported by an INSPIRE Faculty fellowship.}

	\begin{abstract}
		We settle a conjecture proposed by B. Maji and T. Sarkar regarding the location of zeros of a two-parameter family of reciprocal polynomials, $R_{k,\ell}(z)$ for positive integers $k$ and $\ell$. These polynomials are generalizations of Ramanujan polynomials studied by  M. R. Murty, C. Smyth, and R. Wang.
		More specifically, we show that except for two real zeros, all other zeros of $R_{k,\ell}(z)$ lie on the unit circle. 
		
	\end{abstract}
	
	\maketitle
	
	\section{\bf Introduction}
	\bigskip
	
	The study of polynomials and their zeros is an important theme in mathematics, with relevance to a wide variety of areas. We call a polynomial $P(x)$ of degree $d$ to be \textit{reciprocal} if $x^d \, P(1/x) = \pm P(x)$. The main object of investigation in this paper is a two-parameter family of reciprocal polynomials related to values of the Riemann zeta-function at even positive integer arguments that arises as a generalization of Ramanujan polynomials introduced by M. R. Murty, C.
	Smyth, and R. Wang in \cite{msw}.  \\
	
	Recall that {the Riemann zeta function is defined by
		\[
		\zeta(s) := \sum_{n\geq 1} \frac{1}{n^{s}} \qquad \text{for} \qquad 
		\text{Re}(s) > 1,
		\]}
	
	and for $n \geq 0$, the Bernoulli numbers are given by
	\begin{equation*}
		\frac{t}{e^t - 1} = \sum_{n \geq 0} \frac{B_n}{n!} \, t^n \qquad |t| < 2 \pi.
	\end{equation*}
	From this, we have that $B_0 = 1$, $B_1 = -1/2$, $B_{2k+1} = 0$ for $k \geq 1$ and $B_{n} \in \mathbb{Q}$. In particular, we have Euler's formula, namely,
	\begin{equation}\label{zeta-value-eqn}
		\zeta(2k) = - \, \frac{{(2 \pi i)}^{2k} \, B_{2k}}{2 \, (2k) !} \qquad k \geq 1.
	\end{equation}
	Thus, since $\pi$ is transcendental, it is evident that the even zeta-values are transcendental. The transcendental nature of odd zeta-values however, remains unknown. \\
	
	An attempt towards this study is Ramanujan's formula for odd zeta-values (see \cite{r}), that is, for positive real numbers $\alpha$ and $\beta$ with $\alpha \beta = \pi^2$ and any positive integer $k$, 
	\begin{align*}
		\alpha^{-k} \left\{ \frac{1}{2} \, \zeta(2k+1) \, + \, \sum_{n=1}^{\infty} \frac{n^{-2k-1}}{e^{2\alpha n}-1} \right\} \, & - \, {\left( -\beta \right)}^{-k} \, \left\{ \frac{1}{2} \, \zeta ( 2k+1) \, + \, \sum_{n=1}^{\infty} \frac{n^{-2k-1}}{e^{2\beta n}-1} \right\} \\
		& \hspace{-3.5cm} = \, - \, 2^{2k} \, \alpha^{k+1} \, \sum_{j=0}^{k+1} {(-1)}^j \, \frac{B_{2j} \, B_{2k+2-2j}}{(2j)! \, (2k+2-2j)!} \, {\left( \frac{\beta}{\alpha} \right)}^j.
	\end{align*}
	
	A more general version of this formula was independently derived by E. Grosswald \cite{g} in 1970. Let $\mathbb{H}$ denote the upper half plane, that is the set of complex numbers with strictly positive imaginary part. For any positive integer $k$, set
	\begin{equation}\label{ramanujan-poly-def}
		R_{2k+1}(x) := \sum_{j=0}^{k+1} \frac{B_{2j} \, B_{2k+2-2j}}{(2j)! \, (2k+2-2j)!} \, x^{2j}
	\end{equation}
	Then we have the following.
	\begin{theorem*}[Grosswald]
		Let $k \geq 1$ be an integer and $\sigma_k(n) := \sum_{d \mid n} d^k$. For $z \in \mathbb{H}$, define
		\begin{equation*}
			\mathcal{F}_{k} (z) := \sum_{n =1}^{\infty} \frac{\sigma_k(n)}{n^k} \, e^{2 \pi i n z}.
		\end{equation*}
		Then 
		\begin{align}\label{grosswald-form-eqn}
			\mathcal{F}_{2k+1}(z) - z^{2k} \,\mathcal{F}_{2k+1} \left( - \frac{1}{z} \right) = \frac{1}{2} \, \zeta( 2k+1) \, \left( z^{2k} - 1 \right) + \frac{(2 \pi i)^{2k+1}}{2\, z} R_{2k+1}(z).
		\end{align}
	\end{theorem*}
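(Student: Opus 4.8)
The plan is to establish Grosswald's identity \eqref{grosswald-form-eqn} by the classical residue method, integrating a cotangent kernel over a sequence of expanding parallelograms and collecting three groups of residues. Fix $z\in\mathbb{H}$ and set
\[
f_k(w)\ :=\ \frac{\pi\cot(\pi w)\,\pi\cot(\pi w/z)}{w^{2k+1}},
\]
a meromorphic function whose poles are a pole of order $2k+3$ at $w=0$ together with simple poles at the nonzero points of $\mathbb{Z}\cup z\mathbb{Z}$ (these two sets meet only at $0$, since $z\notin\mathbb{R}$). For $N\ge1$ I would integrate $f_k$ over the positively oriented boundary $C_N$ of the parallelogram $\{\,a+bz:\ |a|\le N+\tfrac12,\ |b|\le N+\tfrac12\,\}$. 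On each side of $C_N$, one of the two cotangent factors has argument whose imaginary part is comparable to $N$ (so that factor lies within $o(1)$ of $\pm i$), while the other, after shifting its argument by the half-integer $\pm(N+\tfrac12)$ so that $\cot$ turns into $\mp\tan$, equals $\mp\tan$ of a number of the form $\pi bz$ or $\pi a/z$ with $a,b$ real, and such tangents are bounded on all of $\mathbb{R}$ by a constant depending only on $z$; hence $|\cot(\pi w)|$ and $|\cot(\pi w/z)|$ are bounded on $C_N$ uniformly in $N$. Since $|w|\ge c\,N$ on $C_N$ for some $c=c(z)>0$ and $k\ge1$, the $ML$-estimate gives $\int_{C_N}f_k(w)\,dw=O(N^{-2k})\to0$; by the residue theorem, the sum of \emph{all} residues of $f_k$ is therefore $0$.

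Next I would evaluate the residues. At a nonzero integer $n$ one has $\mathrm{Res}_{w=n}f_k=\pi\cot(\pi n/z)/n^{2k+1}$; pairing $n$ with $-n$, expanding $\pi\cot(\pi u)=\pi i+2\pi i\sum_{m\ge1}e^{-2\pi i mu}$ (legitimate since $\mathrm{Im}(n/z)<0$), and collapsing the resulting double sum through $\sum_{d\mid N}d^{-(2k+1)}=\sigma_{2k+1}(N)/N^{2k+1}$, the total contribution of $\mathbb{Z}\setminus\{0\}$ works out to $2\pi i\,\zeta(2k+1)+4\pi i\,\mathcal{F}_{2k+1}(-1/z)$. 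A parallel computation at the points $w=nz$, using $\mathrm{Res}_{w=nz}\bigl[\pi\cot(\pi w/z)\bigr]=z$ and the expansion valid for $\mathrm{Im}(nz)>0$, gives $-z^{-2k}\bigl(2\pi i\,\zeta(2k+1)+4\pi i\,\mathcal{F}_{2k+1}(z)\bigr)$. For the origin I would write $f_k(w)=z\,\bigl[\pi w\cot(\pi w)\bigr]\bigl[\pi(w/z)\cot(\pi w/z)\bigr]w^{-2k-3}$ and extract the coefficient of $w^{2k+2}$ in the product, using $\pi w\cot(\pi w)=\sum_{j\ge0}(-1)^j\frac{(2\pi)^{2j}B_{2j}}{(2j)!}w^{2j}$; the Cauchy product over the indices with $i+j=k+1$ reproduces the polynomial \eqref{ramanujan-poly-def}, and after the identity $(-1)^{k+1}(2\pi)^{2k+2}=(2\pi i)^{2k+2}$ and the reciprocity $z^{2k+2}R_{2k+1}(1/z)=R_{2k+1}(z)$ one obtains $\mathrm{Res}_{w=0}f_k=(2\pi i)^{2k+2}z^{-2k-1}R_{2k+1}(z)$. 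Setting the sum of the three contributions equal to $0$, multiplying through by $z^{2k}/(4\pi i)$, and simplifying $(2\pi i)^{2k+2}/(4\pi i)=(2\pi i)^{2k+1}/2$ then yields \eqref{grosswald-form-eqn}.

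The conceptual content is light; the work is entirely bookkeeping, concentrated in two places. First, one must pick the contour so that the integral provably tends to $0$ — this is why parallelograms with sides parallel to $1$ and $z$ are used instead of circles, which would pass uncontrollably close to the poles in $z\mathbb{Z}$. Second, the order-$(2k+3)$ residue at $w=0$ must be computed without sign errors, because this is the only place the Ramanujan polynomial $R_{2k+1}$ is produced, and it is precisely where $i^{2k+2}=(-1)^{k+1}$ converts the real Taylor coefficients of $\pi w\cot(\pi w)$ into the powers of $2\pi i$ on the right-hand side of \eqref{grosswald-form-eqn}. As an alternative that makes the structure transparent, one may note that $\mathcal{F}_{2k+1}$ is a normalization of the Eichler integral of the weight-$(2k+2)$ Eisenstein series $E_{2k+2}$; then \eqref{grosswald-form-eqn} amounts to computing the period polynomial of $E_{2k+2}$, which follows from the integral representation of the Eichler integral together with $E_{2k+2}(-1/z)=z^{2k+2}E_{2k+2}(z)$, the term $\tfrac12\zeta(2k+1)(z^{2k}-1)$ arising from the constant term of $E_{2k+2}$.
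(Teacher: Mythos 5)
The paper does not prove this statement at all: it is quoted as a known result of Grosswald \cite{g}, so there is no internal proof to compare against. Your residue-calculus argument is the standard classical proof of the Ramanujan--Grosswald formula, and it checks out. The kernel $\pi^2\cot(\pi w)\cot(\pi w/z)\,w^{-2k-1}$ has exactly the pole structure you describe; the parallelogram contours $C_N$ avoid every pole and keep both cotangent factors uniformly bounded (half-integer periodicity turns the ``dangerous'' factor into $\mp\tan$ evaluated along a fixed line through the origin that meets $\mathbb{R}$ only at $0$, while the other factor has argument with imaginary part of size comparable to $N$), and since $|w|\ge c(z)N$ on $C_N$ the integral is $O(N^{-2k})\to 0$ for $k\ge 1$. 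I verified the three residue computations: the contribution $2\pi i\,\zeta(2k+1)+4\pi i\,\mathcal{F}_{2k+1}(-1/z)$ from $\mathbb{Z}\setminus\{0\}$ (the expansion of $\pi\cot$ is valid because $\mathrm{Im}(n/z)<0$), the contribution $-z^{-2k}\bigl(2\pi i\,\zeta(2k+1)+4\pi i\,\mathcal{F}_{2k+1}(z)\bigr)$ from $z\mathbb{Z}\setminus\{0\}$, and the residue $(2\pi i)^{2k+2}z^{-2k-1}R_{2k+1}(z)$ at $w=0$ obtained from the Taylor coefficients of $\pi w\cot(\pi w)$ together with the reciprocity $z^{2k+2}R_{2k+1}(1/z)=R_{2k+1}(z)$; summing these to zero and multiplying by $z^{2k}/(4\pi i)$ gives precisely \eqref{grosswald-form-eqn}. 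In a full write-up you should make explicit the absolute convergence that justifies collapsing the double sum over $m,n$ into $\sigma_{2k+1}(N)/N^{2k+1}$, and the fact that ``the sum of all residues is zero'' means the limit over $N$ of the residues enclosed by $C_N$; both are routine. Your alternative reading via the Eichler integral of $E_{2k+2}$ and its period polynomial is also legitimate and is in fact the viewpoint the paper alludes to immediately after the theorem.
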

	One can recover Ramanujan's formula from the above identity by specializing at $z=i\alpha/\pi$, $-1/z = i \beta / \pi$ and noting that
	\begin{equation*}
		\mathcal{F}_{k}(z) = - \zeta(k) - \sum_{n=1}^{\infty} \frac{n^{-k}}{e^{-2 \pi i n z}-1}.
	\end{equation*}
	The function $\mathcal{F}_{2k+1}(z)$ is an Eichler integral of the second kind of the standard Eisenstein series of weight $2k+2$ for $\text{SL}_2(\mathbb{Z})$, and has been well-studied in the literature. From \eqref{grosswald-form-eqn}, it is clear that if $z_0$ is a zero of $R_{2k+1}(x)$, which is not a $2k$-th root of unity, then we obtain an expression for the odd zeta value $\zeta(2k+1)$ in terms of special values of the Eichler integral $\mathcal{F}_{2k+1}(z_0)$. \\
	
	With this motivation, in 2011, M. R. Murty, C. Smyth, and R. Wang isolated the polynomials $R_{2k+1}(x)$ for independent study and called them \textit{`Ramanujan polynomials'}. It is not difficult to see that the Ramanujan polynomials are reciprocal polynomials, that is, $R_{2k+1}(x) = x^{2k+2} R_{2k+1}(1/x)$. In the same paper, they observed that most of the zeros of these polynomials lie on the unit circle. 
	\begin{theorem*}[M. R. Murty, C. Smyth, and R. Wang]
		For $k \geq 0$, all the non-real zeros of $R_{2k+1}(x)$ are simple and lie on the unit circle. For $k \geq 1$, $R_{2k+1}(x)$ has exactly four distinct real zeros. 
		These real zeros of $R_{2k+1}(x)$ can be expressed as $\alpha_k$, $-\alpha_k$, $1/\alpha_k$ and $- 1/\alpha_k$ for a real number $\alpha_k > 1$. Moreover, $\alpha_k \in (2,2.2)$ and $\alpha_k \rightarrow 2$ as $k \rightarrow \infty$.
	\end{theorem*}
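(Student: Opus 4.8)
The plan is to replace $R_{2k+1}$ by an explicit elementary approximation coming from a cotangent generating function, and then to count sign changes. By Euler's formula \eqref{zeta-value-eqn}, $B_{2j}/(2j)!=2(-1)^{j+1}\zeta(2j)/(2\pi)^{2j}$ for $j\ge1$, and the same relation holds at $j=0$ with the convention $\zeta(0):=-\tfrac12$; substituting into \eqref{ramanujan-poly-def},
\[
R_{2k+1}(x)=\frac{4(-1)^{k+1}}{(2\pi)^{2k+2}}\sum_{j=0}^{k+1}\zeta(2j)\,\zeta(2k+2-2j)\,x^{2j}.
\]
Summing Euler's formula over $j$ gives $\sum_{j\ge0}\zeta(2j)y^{2j}=-\tfrac{\pi y}{2}\cot(\pi y)$, so extracting the coefficient of $t^{2k+2}$ in the product of two such series yields
\[
\sum_{j=0}^{k+1}\zeta(2j)\zeta(2k+2-2j)x^{2j}=\frac{\pi^2 x}{4}\,[t^{2k}]\,\cot(\pi xt)\cot(\pi t).
\]
Observe already that the interior coefficients $\zeta(2j)\zeta(2k+2-2j)$, $1\le j\le k$, all share one sign, while the constant and leading coefficients have the opposite sign and are smaller in absolute value than each interior one; this sign pattern is what will push the exceptional zeros onto the real axis. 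It also makes the interior coefficients mutually comparable, so no monomial dominates and term-by-term estimates are too lossy.

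Write the last coefficient as $\tfrac1{2\pi i}\oint_{|t|=\rho}t^{-2k-1}\cot(\pi xt)\cot(\pi t)\,dt$ and enlarge the contour through squares avoiding the poles of both cotangents. Since $\cot(\pi xt)\cot(\pi t)\to-1$ away from the real axis, for $k\ge1$ the large-contour integral tends to $0$, and summing the residues at $t=m$ and $t=m/x$ ($m\in\Z\setminus\{0\}$) gives, for suitable $x$,
\[
R_{2k+1}(x)=\frac{2(-1)^k\pi x}{(2\pi)^{2k+2}}\Bigl(\sum_{m\ge1}\frac{\cot(\pi mx)}{m^{2k+1}}+x^{2k}\sum_{m\ge1}\frac{\cot(\pi m/x)}{m^{2k+1}}\Bigr).
\]
On the arc $x=e^{i\theta}$, $0<\theta<\pi$, both series converge absolutely, the $m=1$ terms dominate the rest by a factor $O(2^{-2k})$, and $\cot(\pi e^{i\theta})$ has imaginary part $\le-1$ everywhere and bounded real part, blowing up like $-i/(\pi\theta)$ as $\theta\to0^+$ (and symmetrically near $\pi$). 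Plugging this in, $\phi_k(\theta):=e^{-i(k+1)\theta}R_{2k+1}(e^{i\theta})$ — which is real-valued and even in $\theta$, with $\phi_k(\theta+\pi)=(-1)^{k+1}\phi_k(\theta)$ — equals, up to a relatively small error, a positive amplitude $g_k(\theta)$ (bounded below on $(0,\pi)$, unbounded at the endpoints) times $\sin\!\bigl(k\theta+\psi_k(\theta)\bigr)$ for a slowly varying phase $\psi_k$. Carrying out the contour shift and these estimates is the technical heart of the argument, since it is what exhibits the cancellation hidden among the comparably-sized coefficients.

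Since $\sin\!\bigl(k\theta+\psi_k(\theta)\bigr)$ has at least $k-1$ sign changes in $(0,\pi)$ and $g_k>0$ there, evaluating $\phi_k$ at the $k$ points where the main term attains $\pm g_k$ and checking at each that the error is beaten by $g_k$ gives at least $k-1$ sign changes of $\phi_k$ on $(0,\pi)$. By the symmetries of $\phi_k$, the zeros of $R_{2k+1}$ in the arc $\{e^{i\theta}:0<\theta<\pi\}$ determine all of its unit-circle zeros, so $R_{2k+1}$ has at least $2k-2$ zeros on the unit circle.

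For the real zeros, take $x\in[2,2.2]$ real. For $k$ large the coefficient sum of the first display is dominated by the terms with $j$ near $k$; summing those against the generating-function identity gives $R_{2k+1}(x)=(-1)^k c_k\,x^{2k+1}\cot(\pi/x)+(\text{lower order})$ with $c_k>0$. As $\cot(\pi/x)$ has a single simple zero in $(1,\infty)$, located exactly at $x=2$, comparing the sign of $R_{2k+1}$ at $x=2$ (where the main term vanishes, so the next term decides) with that at $x=2.2$ gives, by the intermediate value theorem, a simple real zero $\alpha_k\in(2,2.2)$, and $\alpha_k\to2$ because the lower-order part is exponentially small against the slope of the main term at $2$. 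Evenness and reciprocity supply $-\alpha_k$, $1/\alpha_k$, $-1/\alpha_k$. Finally $\deg R_{2k+1}=2k+2$, so the $2k-2$ unit-circle zeros together with these four account for all zeros counted with multiplicity: every zero is simple, exactly four are real, and the remaining $2k-2$ lie on the unit circle; $k=0$ and the finitely many small $k$ not covered by the asymptotics are checked by hand. The main obstacle is precisely the uniform, effective control of the errors in the previous two paragraphs — the cotangent sums and the tail of the contour integral, uniformly in $k$ and including at test points creeping toward $\theta=0,\pi$ like $1/k$ — the algebra being routine but the comparably-sized coefficients leaving no cheap estimate available.
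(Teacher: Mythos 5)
Your plan is a programme rather than a proof: the two steps you yourself flag as ``the technical heart'' and ``the main obstacle'' are exactly the steps that carry all the content, and they are not done. Concretely: (i) the passage from the coefficient-extraction integral to the two cotangent sums, and then to the amplitude--phase form $g_k(\theta)\sin(k\theta+\psi_k(\theta))$ with $g_k$ bounded below and the error beaten at test points within distance $\asymp 1/k$ of $\theta=0,\pi$, is asserted, not established; without a uniform-in-$k$ bound on the discarded contour contribution and on the $m\ge 2$ tails \emph{at those near-endpoint test points}, the count of $2k-2$ sign changes --- hence the whole unit-circle statement --- is unproved. Moreover at least one of the quantitative inputs you do state is false as written: $\operatorname{Im}\cot(\pi e^{i\theta})\le -1$ fails on $(0,\pi)$ (at $\theta=\pi/3$ it is $\approx -0.99$); a weaker uniform bound would suffice, but this shows the estimates have not actually been checked. (ii) For the real zeros, the existence of a zero in $(2,2.2)$ hinges on the sign of $R_{2k+1}(2)$, which you leave as ``the next term decides'' without computing that term or its sign, and the claimed domination of the $\cot(\pi/x)$ main term over the ``lower order'' part (needed both for the interval $(2,2.2)$ and for $\alpha_k\to 2$) is not quantified; note also that the cotangent-sum identity itself cannot be invoked verbatim for real rational $x$ such as $x=2.2$, where $\cot(\pi m x)$ has poles ($m=5$), so the coefficient-domination route must be made self-contained. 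Finally, ``exactly four distinct real zeros'' does follow from your degree count only because your putative sign changes occur at interior points of $(0,\pi)$, so this should be said explicitly; as the argument stands, none of the quantitative claims that would trigger the degree count are in place.

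For comparison, the route actually used for this theorem (and for its generalization proved in this paper) is far more elementary and avoids all of the above: one replaces each interior coefficient $2\zeta(2j)\zeta(2k+2-2j)/\zeta(2k+2)$ by $2$ (retaining the extreme ones), bounds the total error on $|z|=1$ by an absolute constant using only crude inequalities for $\zeta(n)-1$ (as in Lemma \ref{bounds-lemma}), and evaluates the resulting explicit trigonometric polynomial at the points $\theta_j=j\pi/(k-1)$, where its sign alternates by inspection; the real zeros are then located by the intermediate value theorem together with Descartes' rule and the reciprocal symmetry. Your cotangent/contour representation is a genuinely different and in principle viable route (it is essentially the Eichler-integral/cotangent Dirichlet series picture behind Grosswald's formula), but to turn it into a proof you must supply the uniform error analysis and the sign computation at $x=2$ that you have deferred.
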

	
	One may recognize the Ramanujan polynomial $R_{2k+1}(x)$ as essentially the odd-part of the period polynomial associated to the Eisenstein series of weight $2k+2$ for $\text{SL}_2(\mathbb{Z})$. This suggests that we can study period polynomials associated to modular forms and in particular to Hecke eigenforms. With this context, the above result gave rise to a flurry of activity in determining the location of zeros of period polynomials of Hecke eigenforms, and their odd parts. This theme is now referred to as the \textit{`Riemann hypothesis for period polynomials'}. The interested reader can refer to the excellent survey \cite{dr} for further details. \\
	
	Recently, several generalizations of Ramanujan's formula have appeared in the literature. In particular, A. A. Dixit and R. Gupta \cite[Theorem 2.1]{dg} obtained an analogous formula for the square of odd zeta values. 
	\begin{theorem*}[A. Dixit, R. Gupta]
		Let $\tau(r) = \# $divisors of $r$, $\epsilon = e^{i \pi / 4}$,  $\widetilde{\sigma}_s(n) := \sum_{d \mid n} \tau(d) \, \tau(n/d) \, d^{s},$ and
		\begin{align*}
			\mathcal{G}_{2k+1}(x) := \zeta^2(2k+1)   \bigg(  \gamma + & \log \left( \frac{x}{\pi}\right)  -  \frac{\zeta'(2k+1)}{\zeta(2k+1)} \bigg) \\
			& + \sum_{n=1}^{\infty} \frac{\widetilde{\sigma}_{2k+1}(n)}{n^{2k+1}} \, \bigg( K_0(4\epsilon\sqrt{n}x) + K_0(4\overline{\epsilon}\sqrt{n}x) \bigg) , \qquad x>0, \qquad
		\end{align*}
		where $K_z(x):= \frac{\pi}{2} \frac{I_{-z}(x) - I_z(x)}{\sin \pi z}$ is the modified Bessel function of the second kind of order $z$, and $I_z$ is that of the first kind. Then, for positive real number $\alpha$ and $\beta$ with $\alpha \beta = \pi^2$, 
		\begin{align*}
			\alpha^{-2k} \,  \mathcal{G}_{2k+1}(\alpha)  -   {(-1)}^k & \beta^{-2k} \, \mathcal{G}_{2k+1} (\beta) \\
			& = - \pi \, 2^{4k} \, \beta^{2k+2} \sum_{j=0}^{k+1} {(-1)}^j {\left( \frac{B_{2j}}{(2j)!} \, \frac{B_{2k+2-2j}}{(2k+2-2j)!} \right)}^2 \, {\left( \frac{\alpha}{\beta} \right)}^{2j}.
		\end{align*}
	\end{theorem*}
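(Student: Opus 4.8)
The plan is to use the Mellin--Barnes contour-integration method that underlies transformation formulas of Ramanujan type: express the Bessel series in $\mathcal G_{2k+1}(x)$ as a line integral of a product of Gamma- and zeta-factors, apply the functional equation of $\zeta$ to interchange $\alpha$ and $\beta$, and recover the right-hand side as the finite sum of residues crossed in the process. First I would record that, since $\widetilde{\sigma}_{2k+1}$ is the Dirichlet convolution of $n\mapsto\tau(n)\,n^{2k+1}$ with $\tau$, one has $\sum_{n\ge1}\widetilde{\sigma}_{2k+1}(n)\,n^{-w}=\zeta(w-2k-1)^2\,\zeta(w)^2$, hence
\[
\sum_{n\ge1}\frac{\widetilde{\sigma}_{2k+1}(n)}{n^{2k+1}}\,n^{-w/2}\;=\;\zeta\!\left(\tfrac{w}{2}\right)^{2}\zeta\!\left(\tfrac{w}{2}+2k+1\right)^{2}.
\]
Feeding this into the Mellin--Barnes representation $K_0(t)=\frac{1}{2\pi i}\int_{(\sigma)}2^{w-2}\Gamma(w/2)^2\,t^{-w}\,dw$ (valid for $|\arg t|<\pi/2$, hence for $t=4\epsilon\sqrt n\,x$ and $t=4\overline{\epsilon}\sqrt n\,x$), interchanging sum and integral, collapsing the two conjugate Bessel terms via $\epsilon^{-w}+\overline{\epsilon}^{-w}=2\cos(\pi w/4)$, and setting $w=2s$, I would obtain for the Bessel series in $\mathcal G_{2k+1}(x)$ the representation $\frac{1}{2\pi i}\int_{(c)}\Phi(s,x)\,ds$ with $c>1$, where
\[
\Phi(s,x):=\frac{\Gamma(s)^2\,\zeta(s)^2\,\zeta(s+2k+1)^2\,\cos(\pi s/2)}{4^{s}}\,x^{-2s}.
\]
A residue computation at $s=0$ (a double pole of $\Gamma(s)^2$ at which $\cos(\pi s/2)$ does not vanish), using $\psi(1)=-\gamma$ and $\zeta'(0)/\zeta(0)=\log(2\pi)$, shows that the residue of $\Phi(s,x)$ there equals a fixed rational multiple of the elementary term $\zeta^2(2k+1)\bigl(\gamma+\log(x/\pi)-\zeta'(2k+1)/\zeta(2k+1)\bigr)$ appearing in $\mathcal G_{2k+1}(x)$, so $\mathcal G_{2k+1}(x)$ itself is packaged into such contour integrals.

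Next I would exploit the functional equation. Moving the contour of $\frac{1}{2\pi i}\int_{(c)}\Phi(s,\alpha)\,ds$ to the reflected line $\text{Re}(s)=-2k-c$ and substituting $s\mapsto-2k-s$ sends $\zeta(s+2k+1)^2$ to $\zeta(1-s)^2$ and $\zeta(s)^2$ to $\zeta(-2k-s)^2$; applying $\zeta(s)=\chi(s)\,\zeta(1-s)$ with $\chi(s)=2^s\pi^{s-1}\sin(\pi s/2)\,\Gamma(1-s)$ to both factors, and using $\Gamma(-2k-s)\,\Gamma(2k+1+s)=-\pi/\sin(\pi s)$ together with $\sin(\pi s)=2\sin(\pi s/2)\cos(\pi s/2)$, all the $\Gamma$-, $\sin$- and $\cos$-factors recombine into the clean identity $\Phi(-2k-s,\alpha)=(-1)^k(\alpha/\pi)^{4k}\,\Phi(s,\beta)$, valid exactly when $\alpha\beta=\pi^2$ — this is where the constraint $\alpha\beta=\pi^2$ enters and where the factor $\beta^{2k+2}$ in the answer originates. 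Consequently $\alpha^{-2k}$ times the Bessel part of $\mathcal G_{2k+1}(\alpha)$ minus $(-1)^k\beta^{-2k}$ times the Bessel part of $\mathcal G_{2k+1}(\beta)$ equals $\alpha^{-2k}$ times the sum of the residues of $\Phi(s,\alpha)$ crossed between the two vertical lines.

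Then I would evaluate those residues, which sit at $s=1,0,-1,\dots,-(2k+1)$. At the negative even integers $s=-2,-4,\dots,-(2k-2)$ the double zero of $\zeta(s)^2$ annihilates the double pole of $\Gamma(s)^2$, so $\Phi$ is regular there. At $s=1-2j$ for $j=0,1,\dots,k+1$ the simple zero of $\cos(\pi s/2)$ reduces the a priori double pole (of $\zeta(s)^2$ when $j=0$, of $\Gamma(s)^2$ when $j\ge1$) to a simple one, and by Euler's formula \eqref{zeta-value-eqn}, $\zeta(-(2j-1))=-B_{2j}/(2j)$, and $\zeta(2k+2-2j)^2=(2\pi)^{2(2k+2-2j)}B_{2k+2-2j}^2/\bigl(4\,((2k+2-2j)!)^2\bigr)$, the residue at $s=1-2j$ is a purely algebraic quantity; summing over $j$ and collapsing the powers of $2,\pi,\alpha,\beta$ via $\alpha\beta=\pi^2$ produces precisely $-\pi\,2^{4k}\beta^{2k+2}\sum_{j=0}^{k+1}(-1)^j\bigl(\tfrac{B_{2j}}{(2j)!}\,\tfrac{B_{2k+2-2j}}{(2k+2-2j)!}\bigr)^2(\alpha/\beta)^{2j}$.

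The main obstacle will be the two remaining residues — the double pole at $s=0$ and the double pole at $s=-2k$, the latter requiring $\zeta'(-2k)=(-1)^k(2k)!\,\zeta(2k+1)/\bigl(2(2\pi)^{2k}\bigr)$ and a second-order Laurent expansion — which carry all the transcendental material ($\gamma$, $\log\alpha$, $\log\beta$, $\zeta'(2k+1)$). One has to check that, together with the elementary term of $\mathcal G_{2k+1}$, these combine so that every $\gamma$, $\zeta'(2k+1)$ and $\log$ cancels, the $\log\alpha$ and $\log\beta$ contributions merging through $\log(\alpha\beta)=\log\pi^2$ into an elementary constant that also vanishes, leaving only the algebraic Bernoulli sum; verifying the clean recombination in the functional-equation step (juggling $\sin(\pi s/2)$, $\cos(\pi s/2)$, $\Gamma(s)^2$, $\Gamma(1-s)^2$ and $\Gamma(-2k-s)^2$ down to a single $\cos(\pi s/2)$) is the other delicate point. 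Justifying the contour shift itself is routine: on vertical lines $\Gamma(s)^2$ decays like $e^{-\pi|t|}$ while $\cos(\pi s/2)$ grows only like $e^{\pi|t|/2}$, so the convexity bounds for $\zeta(s)^2\zeta(s+2k+1)^2$ supply ample decay.
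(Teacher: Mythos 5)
This theorem is not proved in the paper at all: it is quoted as background from Dixit and Gupta \cite{dg}, so there is no in-paper argument to measure your proposal against, and I can only assess it on its own terms. Your skeleton is the standard and appropriate one for identities of this type, and its structural components do check out: the Dirichlet series $\sum_{n\ge 1}\widetilde{\sigma}_{2k+1}(n)\,n^{-2k-1-s}=\zeta(s)^2\zeta(s+2k+1)^2$, the Mellin--Barnes kernel for $K_0$, the resulting integrand $\Phi(s,x)=\Gamma(s)^2\zeta(s)^2\zeta(s+2k+1)^2\cos(\pi s/2)\,4^{-s}x^{-2s}$, the reflection identity $\Phi(-2k-s,\alpha)=(-1)^k(\alpha/\beta)^{2k}\Phi(s,\beta)$ under $\alpha\beta=\pi^2$, and your inventory of poles (simple at $s=1-2j$, $0\le j\le k+1$; double at $s=0$ and $s=-2k$; none at the other negative even integers) are all correct.

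The genuine gap is precisely the step you defer with ``one has to check'': the cancellation of the transcendental terms and the emergence of the stated constant are asserted, not verified, and when one actually carries out your plan with the normalization exactly as quoted it does not close up. Indeed, the residue of $\Phi(s,x)$ at $s=0$ equals $-\tfrac12\,\zeta^2(2k+1)\bigl(\gamma+\log(x/\pi)-\zeta'(2k+1)/\zeta(2k+1)\bigr)$, i.e.\ exactly minus one half of the elementary term $E(x)$ of $\mathcal G_{2k+1}(x)$, and by your reflection identity the residue of $\Phi(s,\alpha)$ at $s=-2k$ equals $-(-1)^k(\alpha/\beta)^{2k}$ times the residue of $\Phi(s,\beta)$ at $s=0$; consequently the two double poles cancel only half of $\alpha^{-2k}E(\alpha)-(-1)^k\beta^{-2k}E(\beta)$, leaving a transcendental remainder $\tfrac12\bigl(\alpha^{-2k}E(\alpha)-(-1)^k\beta^{-2k}E(\beta)\bigr)$ that never disappears. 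Likewise, the simple-pole residues at $s=1-2j$, which you correctly predict to be algebraic, sum (after multiplying by $\alpha^{-2k}$ and using $\beta=\pi^2/\alpha$) to $-\pi\,2^{4k-1}\beta^{2k+2}\sum_{j}(-1)^j\bigl(\tfrac{B_{2j}}{(2j)!}\tfrac{B_{2k+2-2j}}{(2k+2-2j)!}\bigr)^2(\alpha/\beta)^{2j}$, which is one half of the stated right-hand side. So the architecture, applied literally to the statement as printed (Bessel argument $4\epsilon\sqrt{n}\,x$, elementary coefficient $\zeta^2(2k+1)$, constant $-\pi 2^{4k}$), yields a different identity: either a factor of $2$ in the normalization of $\mathcal G_{2k+1}$ (or in the transcription of the theorem) must be located and reconciled, or your packaging of the elementary term as ``a fixed rational multiple of the residue at $s=0$'' is concealing the mismatch. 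Until you compute the residues at $s=0$ and $s=-2k$ explicitly, carry out the Bernoulli-sum evaluation with its constant, and show that they assemble into precisely the displayed identity, the proof is incomplete.
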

	This was further generalized to any positive integer power of odd zeta-values by S. Banerjee and V. Sahani \cite[Theorem 1.1]{bs} in 2023. \\
	
	Motivated by the analogy of the sums in terms of Bernoulli numbers appearing above with Ramanujan polynomials, and supported by numerical evidence, B. Maji and T. Sarkar in \cite{ms} proposed the following conjecture.
	\begin{conjecture}[Maji-Sarkar]\label{ms-conj}
		For positive integers $k$ and $\ell$, set
		\begin{equation*}
			R_{k,\ell}(x) := \sum_{j=0}^{k+1} {\left( -1\right)}^{(\ell+1)j} {\left( \frac{B_{2j}}{(2j)!} \, \frac{B_{2k+2-2j}}{(2k+2-2j)!} \right)}^{\ell} \, x^j.
		\end{equation*}
		Then all the non-real zeros of $R_{k,\ell}(x)$ lie on the unit circle.
	\end{conjecture}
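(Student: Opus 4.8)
\medskip
\noindent\emph{Proof strategy.} The plan is to reduce Conjecture~\ref{ms-conj} to a statement about the zeros of a real self-reciprocal polynomial and then count, with precision, how many of those zeros lie on the unit circle by studying an associated trigonometric polynomial. Using \eqref{zeta-value-eqn} in the form $B_{2j}/(2j)!=2(-1)^{j+1}\zeta(2j)/(2\pi)^{2j}$ (valid for all $j\ge 0$ with the convention $\zeta(0)=-\tfrac12$), one first records the exact sign of each coefficient of $R_{k,\ell}$. Writing $a_j:=\bigl\lvert\tfrac{B_{2j}}{(2j)!}\tfrac{B_{2k+2-2j}}{(2k+2-2j)!}\bigr\rvert^{\ell}$, we have $a_j=\lambda\,w_j$ with $\lambda:=\bigl(4/(2\pi)^{2k+2}\bigr)^{\ell}>0$, where $w_j:=\bigl(\zeta(2j)\zeta(2k+2-2j)\bigr)^{\ell}$ for $1\le j\le k$ and $w_0=w_{k+1}:=\bigl(\zeta(2k+2)/2\bigr)^{\ell}$, so that $w_j=w_{k+1-j}>0$. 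Tracking the signs, after replacing $x$ by $-x$ and/or multiplying by $\pm1$ the polynomial $R_{k,\ell}$ becomes a self-reciprocal polynomial $P(x)=\sum_{j=0}^{k+1}c_jx^j$ of the first kind (so $x^{k+1}P(1/x)=P(x)$ and $\deg P=k+1$) with $\abs{c_j}=a_j$ and a fully explicit sign pattern: all $c_j>0$ if $\ell$ is even, while $c_0=c_{k+1}>0$ and $c_j<0$ for $1\le j\le k$ if $\ell$ is odd. Since these operations preserve both the unit circle and the real line, it suffices to locate the zeros of $P$.

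Two facts about the coefficients drive everything. First, $\zeta(s)=\sum_n n^{-s}$ is log-convex on $(1,\infty)$, so $j\mapsto\log\zeta(2j)+\log\zeta(2k+2-2j)$ is convex and symmetric about $(k+1)/2$; hence the sequence $(w_j)_{1\le j\le k}$ first decreases and then increases, and therefore $\sum_{j=2}^{k}\abs{w_j-w_{j-1}}=2(w_1-w_{\mathrm{mid}})$, where $w_{\mathrm{mid}}=\min_{1\le j\le k}w_j\ge1$. Second, $w_j\ge1$ for $1\le j\le k$ whereas $w_0=w_{k+1}\le(\zeta(4)/2)^{\ell}<1$, and in fact $2w_0<1$ whenever $\ell\ge2$ (the case $\ell=1$ being odd). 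The core step is to show that $P$ has at least $k-1$ zeros on $\abs{z}=1$. Multiplying $P$ by $z-1$ telescopes the coefficients: the ``bulk'' coefficients get replaced by the small differences $w_j-w_{j-1}$, leaving a sparse main part built from the two geometric sums $\sum_{i=0}^{k+1}z^i$ and $\sum_{i=1}^{k}z^i$. On $\abs{z}=1$ these are Dirichlet kernels, both of phase $e^{i(k+1)\theta/2}$ (with $z=e^{i\theta}$), so that the real trigonometric polynomial $g(\theta):=e^{-i(k+1)\theta/2}\sin(\tfrac{\theta}{2})\,P(e^{i\theta})/\lambda$ — whose zeros in $(0,\pi)$ are exactly the zeros of $P$ on the upper unit circle — can be written $g(\theta)=N(\theta)+\mathcal E(\theta)$ with main term $N(\theta)=w_0\sin\!\bigl(\tfrac{(k+2)\theta}{2}\bigr)+\varepsilon\,\sin\!\bigl(\tfrac{k\theta}{2}\bigr)$ (where $\varepsilon=-(w_0+w_1)$ if $\ell$ is odd and $\varepsilon=w_1-w_0$ if $\ell$ is even) and error $\abs{\mathcal E(\theta)}\le\tfrac12\sum_{j=2}^{k}\abs{w_j-w_{j-1}}=w_1-w_{\mathrm{mid}}$. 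At the points $\theta^{*}_{r}=\pi(2r+1)/k$ one has $\abs{\sin(\tfrac{k\theta^{*}_{r}}{2})}=1$, hence $\abs{N(\theta^{*}_{r})}\ge\abs{\varepsilon}-w_0$, which equals $w_1$ for $\ell$ odd and $w_1-2w_0$ for $\ell$ even; by the two facts above this strictly exceeds $w_1-w_{\mathrm{mid}}\ge\abs{\mathcal E(\theta^{*}_{r})}$. Therefore $g(\theta^{*}_{r})$ has the sign of $\varepsilon\,(-1)^{r}$, which alternates with $r$; running $r$ over the values for which $\theta^{*}_{r}\in(0,\pi)$, together with the point $\theta=\pi$ when $k$ is odd (any accidental zero of $g$ there only helps), one obtains $\lfloor(k-1)/2\rfloor$ sign changes of $g$ in $(0,\pi)$, hence that many zeros there, hence $2\lfloor(k-1)/2\rfloor$ zeros of $P$ on $\abs{z}=1$; adding the forced zero at $z=-1$ when $\deg P$ is odd gives $k-1$ zeros of $P$ on the unit circle.

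Because $\deg P=k+1$, at most two zeros of $P$ remain off the unit circle; moreover, since $P$ has real coefficients and is self-reciprocal, its off-circle zeros occur in orbits of size $4$ (a quadruple $z_0,\overline{z_0},1/z_0,1/\overline{z_0}$) or size $2$ (a real reciprocal pair $\{\rho,1/\rho\}$) under $z\mapsto1/z$ and $z\mapsto\bar z$, so the two remaining zeros — if they occur — form a real reciprocal pair. It therefore only remains to show that not all zeros lie on $\abs{z}=1$, i.e.\ that $R_{k,\ell}$ has a real zero other than $\pm1$. When $\ell$ is odd this is immediate: $P(1)=\lambda\bigl(2w_0-\sum_{j=1}^{k}w_j\bigr)<0$ while $P(x)\to+\infty$, so $P$ has a real zero in $(1,\infty)$. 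When $\ell$ is even, $P$ has positive coefficients, so one instead locates a real point at which $R_{k,\ell}(x)=P(-x)$ is negative — concretely, at a point of size comparable to $(2\zeta(2))^{\ell}$, the place where the outer coefficient $a_{k+1}$ overtakes the next one $a_k$ — again producing a real zero off the circle. Consequently $R_{k,\ell}$ has exactly two real zeros off $\abs{z}=1$ and the remaining $k-1$ zeros on it; in particular every non-real zero lies on the unit circle, which is Conjecture~\ref{ms-conj}.

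The delicate point is the counting in the second paragraph. No single coefficient of $P$ dominates, so a direct ``leading term'' argument cannot work; what makes the method succeed is the summation by parts (the multiplication by $z-1$), which turns $P$ into a genuinely dominant Dirichlet kernel plus an error measured by the total variation $\sum_{j=2}^{k}\abs{w_j-w_{j-1}}=2(w_1-w_{\mathrm{mid}})$, and this bound in turn rests on the log-convexity of $\zeta$. Establishing the strict inequality $\abs{N}>\abs{\mathcal E}$ at the test points uniformly in $k$ and $\ell$, and separately handling the behaviour near $\theta=0,\pi$, the exact multiplicity of the zeros at $\pm1$, and the small values of $k$, is where the real work lies.
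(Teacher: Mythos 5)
Your proposal is correct in its essentials, but it takes a genuinely different route from the paper. The paper passes to the degree-$(2k+2)$ even polynomial $\mkl(z)$, approximates it on $|z|=1$ by the sparse polynomial $\akl(z)$ obtained by replacing the bulk coefficients $q_j^{\ell}$ by $1$, and controls the error $\dkl$ through explicit, $\ell$-uniform zeta estimates (Lemmas \ref{lem:(1+x)^l}--\ref{lem:(1+eps_j)^l} and Proposition \ref{delta-upper-bound-prop}, which need $k\ge 3$ and force a separate treatment of $k=1,2$), before counting sign changes at the points $j\pi/(k-1)$ (or their shifts). You instead stay in degree $k+1$, multiply by $z-1$ to telescope the coefficients, and bound the error by the total variation $\sum_{j=2}^{k}|w_j-w_{j-1}|=2(w_1-w_{\mathrm{mid}})$ using the unimodality of $w_j=(\zeta(2j)\zeta(2k+2-2j))^{\ell}$ (your log-convexity argument; this is the content of the paper's Lemma \ref{lem:q_j decreasing}). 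I verified your decomposition: with $z=e^{i\theta}$ one has $(z-1)P(z)=w_0(z^{k+2}-1)+\varepsilon(z^{k+1}-z)+\sum_{j=2}^{k}\pm(w_j-w_{j-1})z^j$, which after multiplying by $e^{-i(k+2)\theta/2}/(2i)$ gives exactly your $N(\theta)+\mathcal{E}(\theta)$ with $\varepsilon=-(w_0+w_1)$ ($\ell$ odd) or $w_1-w_0$ ($\ell$ even), and the decisive inequalities $|\varepsilon|-w_0=w_1>w_1-w_{\mathrm{mid}}$ and $w_1-2w_0>w_1-w_{\mathrm{mid}}$ do hold, since $w_{\mathrm{mid}}>1>2w_0$ when $\ell\ge 2$. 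Your sign-change count then yields $k-1$ zeros of $P$ on $|z|=1$, matching the paper's Theorem \ref{aux-thm-complex-zeros}, but with two advantages: the estimates are clean of the paper's $c_{\ell}$-bookkeeping, and no lower bound on $k$ is needed (small $k$ is absorbed by the degree count). What the paper's heavier route buys is the precise real-zero analysis (Propositions \ref{l-odd-real-zero-prop}--\ref{l-even-real-zero-prop} and Theorem \ref{real-zero-location-aux-thm}) needed for Theorem \ref{main-thm-real-l-odd} and for simplicity of the zeros.

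Two caveats. First, your closing claim that exactly two real zeros lie off the circle rests, for $\ell$ even, on an asserted but unproven existence of a real zero off $|z|=1$ ``near $(2\zeta(2))^{\ell}$''; in the paper this is precisely where real work happens (Proposition \ref{l-even-real-zero-prop}, via the sign of $\mkl'(1)$ and the bounds $\gkl<-1$, $\hkl>1.5$). This does not affect the Conjecture itself: once at most two zeros (with multiplicity) are off the circle, your quadruple observation for real self-reciprocal polynomials already forces every non-real zero onto $|z|=1$. Second, at $\theta=\pi$ with $k$ odd your own estimates give $g(\pi)\neq 0$ (since $|N(\pi)|=|\varepsilon-w_0|$ exceeds $w_1-w_{\mathrm{mid}}$ in both parities of $\ell$), so the ``accidental zero'' hedge is unnecessary; spelling this out is part of the endpoint bookkeeping you rightly flag as the remaining routine work.
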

	\noindent Note that $R_{k,1}(x^2) = R_{2k+1}(x)$, the Ramanujan polynomial. Thus, for $\ell=1$, the conjecture follows from the theorem of M. R. Murty, C. Smyth, and R. Wang \cite{msw}. \\
	
	It is easy to see that $R_{k,\ell}(x)$ is a reciprocal polynomial, that is $x^{k+1}R_{k,\ell}(1/x) = \pm R_{k,\ell}(x)$. 
	In this paper, we study the zeros of $R_{k,\ell}(x)$ and show that the above conjecture is true. Our main theorems can be summarized as follows.
	\begin{theorem}\label{main-thm-unit-circle}
		For positive integers $k$ and $\ell$, Conjecture \ref{ms-conj} is true. More specifically, except for two real zeros of the form 
		\begin{equation*}
			\alpha_{k,\ell} \qquad \text{ and } \qquad \frac{1}{\alpha_{k,\ell}}
		\end{equation*}
		with $\alpha_{k,\ell}>1$, the remaining zeros of $R_{k,\ell}(x)$ lie on the unit circle. Moreover, all the zeros are simple.
	\end{theorem}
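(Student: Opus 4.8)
The plan is to normalize $\rkl$, reformulate the statement about unit‑circle zeros as one about the real zeros of a companion polynomial, and then locate those zeros by a sign‑change analysis of an associated real trigonometric polynomial.

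\textbf{Normalization.} Using $\frac{B_{2j}}{(2j)!}=(-1)^{j+1}\frac{2\zeta(2j)}{(2\pi)^{2j}}$ for $j\ge 1$ (and $\frac{B_0}{0!}=1$), a short computation shows that, up to a nonzero real constant and---when $\ell$ is even---the substitution $x\mapsto -x$, the zeros of $\rkl(x)$ coincide with those of
\[
\widetilde P(x)=\sum_{j=0}^{k+1}c_j\,x^j,\qquad c_j:=\bigl(\zeta(2j)\,\zeta(2k+2-2j)\bigr)^{\ell}\quad\text{(with the convention }\zeta(0)=-\tfrac12\text{).}
\]
I would record the structural facts that drive everything: $\widetilde P$ is reciprocal of degree $k+1$ with sign $\epsilon=(-1)^{(\ell+1)(k+1)}$; for $1\le j\le k$ one has $c_j>1$, $c_j=c_{k+1-j}$, and $c_1\ge c_2\ge\cdots\ge c_{\lfloor (k+1)/2\rfloor}$ (the monotonicity following from convexity of $s\mapsto\log\zeta(s)$ on $(1,\infty)$, since $\{2,\,2k\}$ majorizes $\{4,\,2k-2\}$ majorizes $\cdots$); the interior coefficients decrease from $c_1\approx\zeta(2)^{\ell}$ towards the central value $\approx 1$, rapidly so when $\ell$ is large; while the boundary coefficients $c_0=c_{k+1}=(-\tfrac12)^{\ell}\zeta(2k+2)^{\ell}$ satisfy $|c_0|\le(\pi^2/12)^{\ell}<1$ with sign $(-1)^{\ell}$. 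Heuristically $\widetilde P$ is $\sum_{j=1}^{k}c_jx^j$ plus the small perturbation $c_0(1+x^{k+1})$, which suggests that $k-1$ of its zeros cluster near $|x|=1$ and one reciprocal pair lies off it; making this precise is the final step. (An alternative would be to work through the generating identity $\rkl(x)=[w^{2k+2}]\,\Phi_\ell(aw)\,\Phi_\ell(w)$ with $\Phi_\ell(w)=\sum_{j\ge 0}(B_{2j}/(2j)!)^{\ell}w^{2j}$ and $a^2=(-1)^{\ell+1}x$, expanding a contour integral as in Grosswald's derivation; but the analytic structure of $\Phi_\ell$ is far less transparent for $\ell\ge 2$ than for $\ell=1$, where $\Phi_1(w)=\tfrac{w}{2}\coth\tfrac{w}{2}$.)

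\textbf{Reduction to a companion polynomial.} When $k+1$ is odd, reciprocity forces $x=-1$ (if $\epsilon=1$) or $x=1$ (if $\epsilon=-1$) to be a zero of $\widetilde P$; dividing it out leaves an even‑degree reciprocal polynomial, say of degree $2m$, with the same kind of coefficients. Writing such a polynomial as $x^{m}g(x+x^{-1})$ for a real polynomial $g$ of degree $m$, the substitution $x+x^{-1}=2\cos\theta$ puts the zeros of $\widetilde P$ on $|x|=1$ in bijection with the real zeros of $g$ in $[-2,2]$, and its zeros with $|x|\ne 1$ in bijection with the real zeros of $g$ outside $[-2,2]$ (a zero $y_0>2$ producing the pair $\alpha,1/\alpha$ with $\alpha>1$; a zero $y_0<-2$ producing $-\alpha,-1/\alpha$). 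Thus the theorem is equivalent to: $g$ has $m$ simple real zeros, exactly one outside $[-2,2]$, and that one exceeds $2$. Equivalently, the real cosine polynomial $T(\theta)$ obtained from $e^{-i(k+1)\theta/2}\widetilde P(e^{i\theta})$ (it becomes real after division by a fixed unimodular constant when $\epsilon=-1$) should have exactly $m-1$ simple zeros in $(0,\pi)$, and $\widetilde P$ should in addition have a reciprocal pair of real zeros straddling $1$ which, after undoing the first‑step substitution, is the pair $\alpha,1/\alpha$ with $\alpha>1$.

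\textbf{Locating the zeros.} Pairing the $j$‑th term of $\widetilde P$ with the $(k+1-j)$‑th turns $T$ into a cosine polynomial whose only top‑frequency contribution is $2c_0\cos\bigl(\tfrac{k+1}{2}\theta\bigr)$, of small amplitude, the lower frequencies carrying the positive slowly‑decaying coefficients $2c_1\ge 2c_2\ge\cdots$. I would then choose $m$ sample points $0<\theta_1<\cdots<\theta_m<\pi$ adapted to the sign changes of the dominant part of $T$---which, according as $\ell$ is small or large relative to $k$, behaves like a Dirichlet kernel plus a constant or like $2c_1\cos\bigl(\tfrac{k-1}{2}\theta\bigr)$---and verify, using the explicit bounds $1<\zeta(2j)\le\pi^2/6$ together with the rapid convergence $\zeta(2j)=1+O(4^{-j})$, that $T$ takes strictly alternating signs at $\theta_1,\dots,\theta_m$. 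This yields $m-1$ distinct zeros of $T$ in $(0,\pi)$, hence (adding back the possible $x=\pm1$) at least $k-1$ distinct zeros of $\widetilde P$ on $|x|=1$. For the remaining zeros I would argue on the real axis: for $\ell$ odd, $\widetilde P(0)=c_0<0$, $\widetilde P(1)=2c_0+\sum_{j=1}^{k}c_j\ge k-\pi^2/6>0$ for $k\ge 2$ (with $k=1$ checked directly), and $\widetilde P(x)\to-\infty$ as $x\to+\infty$ since $c_{k+1}<0$; so $\widetilde P$ has one zero in $(0,1)$ and one in $(1,\infty)$, a reciprocal pair $1/\alpha,\alpha$ with $\alpha>1$. (For $\ell$ even all $c_j>0$, so $\widetilde P$ has no positive zero; one instead uses $\widetilde P(-1)<0<\widetilde P(0)$ and $\widetilde P(x)\to+\infty$ as $x\to-\infty$ to get a pair $-\alpha,-1/\alpha$ with $\alpha>1$, whence $\rkl$ has the pair $\alpha,1/\alpha$.) These two real zeros lie off $|x|=1$ and are distinct from the $k-1$ found above, so $\widetilde P$ has at least $k+1$ distinct zeros; since $\deg\widetilde P=k+1$ they are exactly these, all simple, which is the theorem.

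\textbf{The main obstacle} is the sign‑change verification just sketched, carried out uniformly in $k$ and in $\ell\ge 2$ (the case $\ell=1$ is the theorem of M.\ R.\ Murty, C.\ Smyth and R.\ Wang applied to $R_{2k+1}$). For $\ell\ge 2$ the exact generating‑function identities behind the $\ell=1$ proof are no longer available, so one must push through two‑sided estimates for the ratios $c_j/c_{j'}$ that remain usable as the $\ell$‑th power sweeps from the regime in which all interior coefficients are comparable to the one in which $c_1$ dominates; the delicate point is keeping the small top‑frequency term $2c_0\cos\bigl(\tfrac{k+1}{2}\theta\bigr)$ under control precisely where the dominant part of $T$ is close to zero, which is where a clean, uniform choice of the sample points $\theta_r$ is hardest to pin down. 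I would expect small $k$ (where the interior coefficients have not yet settled near $1$) to need a separate finite check, and a little care with the parity and sign bookkeeping to guarantee that the exceptional zero of $g$ lies above $+2$ rather than below $-2$.
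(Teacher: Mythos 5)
Your skeleton is essentially the published proof: normalize to a reciprocal polynomial whose interior coefficients are $\bigl(\zeta(2j)\zeta(2k+2-2j)\bigr)^{\ell}$, get the off-circle pair by intermediate-value arguments on the real axis, and get about $k$ unit-circle zeros by showing that the real-valued function obtained from the polynomial on $|z|=1$ alternates in sign at about $k$ sample points, treating the nearly-constant part of the coefficients as a small perturbation. The genuine gap is precisely the step you label ``the main obstacle'' and then do not carry out: you never fix the comparison trigonometric polynomial, the sample points, or the two-sided bounds, and that is where the entire content of the theorem lives. The paper's resolution is to keep the dominant interior coefficient $(2q_1)^{\ell}$, $q_1=\zeta(2)\zeta(2k)/\zeta(2k+2)$, exactly and to replace $q_j^{\ell}$ by $1$ for $2\le j\le k-1$; the resulting error on the unit circle is bounded, uniformly in $k\ge 3$, by $2^{\ell}c_{\ell}\times 0.2762$ with $c_{\ell}=\frac{(1.306)^{\ell}-1}{0.306}$ (this needs $\zeta(2j)\zeta(2k+2-2j)<1+\varepsilon_{k,j}$ with explicit $\varepsilon_{k,j}$ and the linearization $(1+\varepsilon_{k,j})^{\ell}\le 1+c_{\ell}\varepsilon_{k,j}$), while at the sample points $\theta_j=\frac{j\pi}{k-1}$ (or $\frac{(2j-1)\pi}{2(k-1)}$ when $k$ and $\ell$ are both even) the approximant has modulus at least $2(2q_1)^{\ell}-2^{\ell}-2$. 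The whole argument hinges on the uniform-in-$\ell$ numerical comparison $2(2\zeta(2))^{\ell}-2^{\ell}-2>2^{\ell}c_{\ell}\times 0.2762$, which works because $2\zeta(2)>2\times 1.306$; your ``two-sided estimates for the ratios $c_j/c_{j'}$'' is a statement of intent, not a substitute for this.

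Two further points are more than ``parity and sign bookkeeping''. For $\ell$ even and $k$ even your real-axis step fails as written: the value you want to be negative is exactly the forced zero (in your normalization $\widetilde P(-1)=0$), so after dividing it out you need a sign condition on the derivative there; in the paper this is the hardest real-zero case (Proposition 3.3(b)), requiring $H_{k,\ell}=\frac{2^{\ell}}{k+1}\sum_{j=1}^{k}(-1)^j j\,q_j^{\ell}>1$, proved via a lower bound on $q_1/q_2$ for $k\ge 4$ and a separate check at $k=2$. Also, your $\widetilde P$ is $+1$-reciprocal (its coefficients satisfy $c_j=c_{k+1-j}$), so the sign $\epsilon=(-1)^{(\ell+1)(k+1)}$ you attach to it actually belongs to $R_{k,\ell}$ itself; and the small cases $k=1,2$ (and $k=3$, below which the trigonometric estimates are not claimed) that you defer to ``a separate finite check'' must be settled, as the paper does by combining the real-zero propositions with degree counting. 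In short: same route as the paper, but the quantitative lemmas that constitute the proof are missing.
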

	
	\noindent When $\ell$ is odd, we also determine the location of the largest real zero.
	\begin{theorem}\label{main-thm-real-l-odd}
		For positive integers $k$ and $\ell$, with $\ell$ - odd and $k \geq 3$, let $\alpha_{k,\ell}$ denote the largest real zero of $R_{k,\ell}(x)$. Then 
		\begin{equation*}
			\alpha_{k,\ell} \in  \bigg( \, \, \left(\frac{2 \, \zeta(2) \, \zeta(2k)}{\zeta(2k+2)}\right)^{\ell}, \, \, \big(2\zeta(2)\big)^{\ell} \, \bigg({\frac{2}{\zeta(2)}\left(1+\frac{3 \, d_{\ell}}{4^k} \right)}\bigg)
			\, \, \bigg),
		\end{equation*}
		where $d_{\ell}=\frac{(1.75)^{\ell}-1}{0.75}.$ 
	\end{theorem}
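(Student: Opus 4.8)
Throughout $\ell$ is odd, so $(-1)^{(\ell+1)j}=1$ and $\rkl(x)=\sum_{j=0}^{k+1}C_j^{\ell}x^{j}$ with $C_j:=\tfrac{B_{2j}}{(2j)!}\cdot\tfrac{B_{2k+2-2j}}{(2k+2-2j)!}$. By \eqref{zeta-value-eqn}, $|C_j|(2\pi)^{2k+2}=4\zeta(2j)\zeta(2k+2-2j)$ for $1\le j\le k$ and $|C_0|(2\pi)^{2k+2}=|C_{k+1}|(2\pi)^{2k+2}=2\zeta(2k+2)$, while $\operatorname{sgn}(C_j)=(-1)^{k}$ for $j\in\{0,k+1\}$ and $(-1)^{k+1}$ for $1\le j\le k$. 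Hence $(-1)^{k}(2\pi)^{(2k+2)\ell}\rkl(x)$ equals
\[
Q(x):=e_{0}\bigl(x^{k+1}+1\bigr)-\sum_{j=1}^{k}e_{j}x^{j},\qquad
e_{0}:=(2\zeta(2k+2))^{\ell},\quad e_{j}:=(4\zeta(2j)\zeta(2k+2-2j))^{\ell},
\]
a palindromic polynomial ($e_{j}=e_{k+1-j}$) with the same zeros as $\rkl$. Since $s\mapsto\log\zeta(s)$ is convex on $(1,\infty)$ (a Dirichlet series with nonnegative coefficients), $t\mapsto\log\zeta(2t)+\log\zeta(2k+2-2t)$ is convex and symmetric about $t=(k+1)/2$, so its maximum over $[1,k]$ is at the endpoints; thus $e_{1}=e_{k}=\max_{1\le j\le k}e_{j}$, strictly so for $2\le j\le k-1$.

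\textit{Lower bound.} Let $L:=e_{1}/e_{0}=\bigl(2\zeta(2)\zeta(2k)/\zeta(2k+2)\bigr)^{\ell}$; since $\zeta(2k)>\zeta(2k+2)$ we get $L>(2\zeta(2))^{\ell}>1$. As $e_{0}L=e_{1}=e_{k}$, the terms $e_{0}L^{k+1}$ and $e_{k}L^{k}$ cancel in $Q(L)$, leaving
\[
Q(L)=e_{0}-\sum_{j=1}^{k-1}e_{j}L^{j}\le e_{0}-e_{1}L=e_{0}(1-L^{2})<0
\]
(the $j=1$ term is present because $k\ge3$). Since $Q(x)\to+\infty$, $Q$ has a zero in $(L,\infty)$; by Theorem~\ref{main-thm-unit-circle} the only real zeros of $\rkl$ are $\alpha_{k,\ell}>1$ and $1/\alpha_{k,\ell}$, hence $\alpha_{k,\ell}>L$.

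\textit{Upper bound.} Put $U:=(2\zeta(2))^{\ell}\cdot\frac{2}{\zeta(2)}\bigl(1+3d_{\ell}/4^{k}\bigr)$; it suffices to show $Q(x)>0$ for all $x\ge U$, i.e. $e_{0}(x^{k+1}+1)>\sum_{j=1}^{k}e_{j}x^{j}$. Dividing by $x^{k}$ and substituting $j=k-i$ (so $e_{j}=e_{i+1}$ by palindromy), this reads $e_{0}x+e_{0}x^{-k}>e_{1}+T(x)$ with $T(x):=\sum_{i=1}^{k-1}e_{i+1}x^{-i}$, and since $x\mapsto e_0x+e_0x^{-k}$ is increasing and $T$ decreasing for $x\ge U\ge4$, it is enough to check this at $x=U$. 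We estimate $T(U)$ by splitting the sum into a short head ($i\le i_{0}$), a geometric middle ($e_{i+1}\le(4\zeta(2i_{0}+4)\zeta(2i_{1}+2))^{\ell}$, close to $4^{\ell}$), and a short reverse head ($i\ge k-i_{1}$, tiny because there $x^{-i}\le U^{-(k-i_{1})}$ while $U$ is large; for $k=3$ there is no middle and $T(U)=e_{2}/U+e_{1}/U^{2}$). In the head one keeps $e_{i+1}$ almost exactly, via the elementary bounds
\[
\zeta(2m)-1=\sum_{n\ge2}n^{-2m}\le 4^{-m}\sum_{n\ge2}4n^{-2}=4(\zeta(2)-1)4^{-m}\quad(m\ge1),\qquad
(1+a)^{\ell}=1+a\sum_{i=0}^{\ell-1}(1+a)^{i}\le 1+d_{\ell}a\quad(0\le a\le\tfrac34),
\]
the latter being exactly why $d_{\ell}=\sum_{i=0}^{\ell-1}(1.75)^{i}=\bigl((1.75)^{\ell}-1\bigr)/0.75$ (each factor $1+a\le 7/4$). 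Everything then reduces to the single inequality $e_{0}U-e_{1}+e_{0}U^{-k}\ge T(U)$. The crucial point is that $e_{0}U$ exceeds $e_{1}$ with room to spare: from $\bigl(\tfrac{\zeta(2k)}{\zeta(2k+2)}\bigr)^{\ell}\le\bigl(1+4(\zeta(2)-1)4^{-k}\bigr)^{\ell}\le 1+4(\zeta(2)-1)d_{\ell}4^{-k}$ and the numerical facts $\tfrac{2}{\zeta(2)}-1>0$, $\tfrac{6}{\zeta(2)}>4(\zeta(2)-1)$, one obtains $e_{0}U=(4\zeta(2)\zeta(2k+2))^{\ell}\tfrac{2}{\zeta(2)}(1+3d_{\ell}4^{-k})\ge e_{1}\cdot\tfrac{2}{\zeta(2)}$, so that $e_{0}U-e_{1}\ge(\tfrac{2}{\zeta(2)}-1)e_{1}$, which is at least a fixed constant times $(4\zeta(2))^{\ell}$, whereas the dominant term $e_{2}/U$ of $T(U)$ is at most a constant times $(2\zeta(4)^{2}/\zeta(2))^{\ell}$, a strictly smaller exponential; the remaining terms of $T(U)$ are of yet lower order. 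A careful (non‑wasteful) accounting of these pieces yields $Q(x)>0$ for $x\ge U$, hence $\alpha_{k,\ell}<U$. Simplicity of all zeros is part of Theorem~\ref{main-thm-unit-circle}.

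\textit{The main obstacle.} The upper estimate is genuinely tight and that is where all the work lies. When $\ell=1$ we have $R_{k,1}(x^{2})=R_{2k+1}(x)$, so $\alpha_{k,1}=\alpha_{k}^{2}\to4$ by Murty--Smyth--Wang, while at the same time $U=4(1+3\cdot4^{-k})\to4$; indeed $Q(x)/x^{k}$ at $x=(2\zeta(2))^{\ell}\frac{2}{\zeta(2)}$ tends to $0$ as $k\to\infty$ for $\ell=1$ — a reflection of the identity $\sum_{m\ge1}\zeta(2m)4^{-m}=\tfrac12$ — so the correct sign must be extracted from the $\zeta(2m)-1=O(4^{-m})$ corrections rather than from the leading term. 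For $\ell\ge3$ there is already slack at the base point $(2\zeta(2))^{\ell}\frac{2}{\zeta(2)}$ and the estimate is comfortable, and for large $\ell$ the factor $(1+3d_{\ell}/4^{k})$ makes $U$ so large that $T(U)$ is negligible against $e_{0}U-e_{1}$. Reconciling the small‑$\ell$, small‑$k$ regime (where nothing may be discarded, and the sharpest forms of the above inequalities are needed) with the large‑$\ell$ regime through one clean chain of estimates is the technical heart of the proof.
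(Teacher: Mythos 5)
Your lower bound is correct and is essentially the paper's argument in different clothing: evaluating at $L=(2q_1)^{\ell}$ makes the top two terms cancel and the $j=1$ term already forces negativity. The problem is the upper bound. Your reduction to the single inequality $e_0U-e_1+e_0U^{-k}\ge T(U)$ at $x=U$ is legitimate, but the justification you give for that inequality cannot work in the critical regime. You argue that $e_0U-e_1\ge\bigl(\tfrac{2}{\zeta(2)}-1\bigr)e_1$, of size $\asymp(4\zeta(2))^{\ell}$, beats the ``dominant term'' $e_2/U\lesssim(2\zeta(4)^2/\zeta(2))^{\ell}$, ``the remaining terms of $T(U)$ being of yet lower order.'' That compares only individual terms and only their exponential rates in $\ell$, and at $\ell=1$ (exactly where, as you note yourself, the bound is tight) the full sum $T(U)$ is \emph{not} of lower order: as $k\to\infty$ one has $e_0U-e_1\to 8-4\zeta(2)$ while $T(U)\to\sum_{i\ge1}4\zeta(2i+2)\,4^{-i}=8-4\zeta(2)$, by the very identity $\sum_{m\ge1}\zeta(2m)4^{-m}=\tfrac12$ you cite. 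So for small $\ell$ the needed inequality survives only by an $O(4^{-k})$ margin coming from the corrections $\zeta(2m)-1$ and from the factor $1+3d_\ell 4^{-k}$ built into $U$, and your sketch never extracts that margin: the head/middle/tail split of $T(U)$ is left with unspecified cut-offs and no estimates, and the concluding ``careful (non-wasteful) accounting'' --- which you correctly identify as the technical heart --- is precisely the step that is missing. As written, the decisive quantitative claim is asserted, not proved.

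For comparison, the paper closes exactly this gap by different bookkeeping: instead of bounding $T(U)$ as a whole, it writes the constant term as $1=2\sum_{j\ge1}\zeta(2j)4^{-j}$ (Lemma \ref{bounds-lemma}, equation \eqref{lem:identity of sum of even zeta values}) and asks for the term-by-term inequality $2\zeta(2j)/4^{j}>(2q_j)^{\ell}/(4+t)^{j}$ for every $j=1,\ldots,k$; after $\zeta(2k+2-2j)/\zeta(2k+2)<1+3\cdot4^{j-k-1}$, Lemma \ref{lem:(1+x)^l} with $b=0.75$ (which is where $d_\ell$ comes from), and $(1+a)^{1/j}\le 1+a/j$, the worst case $j=1$ yields precisely the sufficient condition $4+t>2^{\ell+1}\zeta(2)^{\ell-1}(1+3d_\ell 4^{-k})$. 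Pairing each negative term against its own slice of the constant coefficient is what renders the $\ell=1$ tightness harmless uniformly in $k$. If you want to keep your formulation via $T(U)$, you need an analogous term-by-term pairing (each $e_{i+1}U^{-i}$ against $2\zeta(2i+2)4^{-i-1}$ times the appropriate normalization), not a comparison of leading exponential rates in $\ell$.
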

	
	A few remarks are in order. First of all, it must be noted that when $\ell$ is even, we can consider the related family of polynomials $\widetilde{R}_{k,\ell}(x):= R_{k,\ell}(-x)$. Theorem \ref{main-thm-unit-circle} immediately implies that all the non-real zeros of these polynomials, namely,
	\begin{equation*}
		\sum_{j=0}^{k+1} {\left( \frac{B_{2j} \, B_{2k+2-2j}}{(2j)! \, (2k+2-2j)!} \right)}^{\ell} \, x^j,
	\end{equation*}
	also lie on the unit circle, for $k \geq 3$ and $\ell \geq 1$. However, for technical reasons that will be evident in the proof, it is more convenient to prove Theorem \ref{main-thm-unit-circle} for $R_{k,\ell}(x)$ than for $\widetilde{R}_{k,\ell}(x)$. The method of proof of Theorem \ref{main-thm-unit-circle} is inspired by that in \cite{msw}. However, the corresponding estimates in our case require to ascertain the dependence on $\ell$, which adds an extra layer of difficulty. \\
	
	The questions regarding the location of the largest real zero of $R_{k,\ell}(x)$ when $\ell$ is even, as well as the roots of unity which may be zeros of $R_{k,\ell}(x)$ appear to be very subtle and involved in this generality. We relegate these investigations to future research.
	
	\medskip

	\section{\bf Preliminaries}
	
	\bigskip 
	
	This section is dedicated to deriving the estimates that will be crucial in our proofs.\\
	
	\noindent Instead of the polynomial $R_{k,\ell}(z)$, we work with its monic, even counterpart, 
	\begin{equation*}
		\mkl(z) = \frac{{\left( -1 \right)}^{(\ell+1)(k+1)} }{{\left(B_{2k+2}/(2k+2)! \right)}^{\ell}} \, R_{k,\ell}(z^2).
	\end{equation*}
	Using \eqref{zeta-value-eqn}, $\mkl(z)$ can be rewritten as
	\begin{equation}\label{eq:M_{k,l}}
		\mkl(z) =z^{2k+2} + {(-1)}^{(\ell+1) (k+1)} - 2^{\ell} \sum_{j=1}^{k} (-1)^{(\ell+1)(k+j)} \, {\left( \frac{\zeta(2j) \, \zeta(2k+2-2j) }{\zeta(2k+2)} \right)}^{\ell} \, z^{2j}.
	\end{equation}
	For simplicity, let
	\begin{equation*}
		q_j := \frac{\zeta(2j) \, \zeta(2k+2-2j) }{\zeta(2k+2)} {\qquad j=1,\, 2, \, \ldots, \, k},
	\end{equation*}
	with the dependence on $k$ being implicit. Thus,
	\begin{equation*}
		\mkl(z)
		=z^{2k+2} + (-1) ^{(\ell+1) (k+1)}
		- 2^{\ell}
		\sum_{j=1}^{k}
		(-1)^{(\ell+1) (k+j)} \, 
		q_j^{\ell} \, 
		z^{2j}.
	\end{equation*}
	The zeros of $R_{k,\ell}(z)$ are squares of the zeros of $M_{k,\ell}(z)$.\\

	Recall the following preliminary lemmas from \cite{msw}. 
	\begin{lemma}\label{bounds-lemma}
		We have the following inequalities for values of the Riemann zeta-function.
		\begin{itemize}
			\item[(a)] For $n \geq 2$, 
			\begin{equation}\label{lem:inequalities of zeta(n)}
				1+2^{-n}<\zeta(n)<1 \, + \, \left( \frac{n+1}{n-1}\right) \,  2^{-n}.
			\end{equation}
			\item[(b)] For $k\ge 1$ and {$j=1,\ldots,k$},
			\begin{equation}\label{lem:upper bound of quotient of zeta values}
				\frac{\zeta(2k+2-2j)}{\zeta(2k+2)}-1
				<3\cdot4^{j-(k+1)}.
			\end{equation}
			\item[(c)]  For $k\ge 3$ and $2\le j\le k-1$, 
			\begin{equation}\label{lem:max valu of frac{(2j+1)(2k+3-2j)}{(2j-1)(2k+1-2j)}}
				\frac{(2j+1)(2k+3-2j)}{(2j-1)(2k+1-2j)}
				< 25/9.
			\end{equation}
			\item[(d)] We have
			\begin{equation}\label{lem:identity of sum of even zeta values}
				\sum_{j=1}^{\infty}
				\frac{\zeta(2j)}{4^j}
				=\frac 12.
			\end{equation}
		\end{itemize}
	\end{lemma}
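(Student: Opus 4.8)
The plan is to dispatch the four estimates by elementary means, in the order (a), (b), (d), (c). For (a) I would write $\zeta(n) = 1 + 2^{-n} + \sum_{m \ge 3} m^{-n}$; the lower bound is then immediate from positivity of the remaining terms, while for the upper bound I would compare the tail with an integral, $\sum_{m \ge 3} m^{-n} < \int_2^{\infty} t^{-n}\,dt = \tfrac{2}{n-1}\,2^{-n}$, so that $\zeta(n) < 1 + 2^{-n}\bigl(1 + \tfrac{2}{n-1}\bigr) = 1 + \tfrac{n+1}{n-1}\,2^{-n}$.

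For (b) I would set $m := 2k+2-2j$, so that $2 \le m \le 2k$ when $1 \le j \le k$, and exploit that $\zeta$ is strictly decreasing on $(1,\infty)$ with $\zeta(2k+2) > 1$. This yields
\begin{equation*}
\frac{\zeta(2k+2-2j)}{\zeta(2k+2)} - 1 = \frac{\zeta(m)-\zeta(2k+2)}{\zeta(2k+2)} < \zeta(m) - \zeta(2k+2) < \zeta(m) - 1 < \frac{m+1}{m-1}\,2^{-m},
\end{equation*}
the final inequality being part (a); since $2^{-m} = 4^{j-(k+1)}$ and $\tfrac{m+1}{m-1} \le 3$ for $m \ge 2$, the claimed bound follows, and the first two links in the chain are strict so the whole inequality is strict. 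For (d) I would interchange the order of summation (valid as all terms are positive) and sum the resulting geometric series,
\begin{equation*}
\sum_{j=1}^{\infty} \frac{\zeta(2j)}{4^j} = \sum_{n=1}^{\infty} \sum_{j=1}^{\infty} \Bigl(\frac{1}{4n^2}\Bigr)^{j} = \sum_{n=1}^{\infty} \frac{1}{4n^2-1} = \frac{1}{2}\sum_{n=1}^{\infty}\Bigl(\frac{1}{2n-1} - \frac{1}{2n+1}\Bigr) = \frac{1}{2},
\end{equation*}
using partial fractions and telescoping at the last step; alternatively one could specialize the classical expansion $\pi x \cot(\pi x) = 1 - 2\sum_{j \ge 1}\zeta(2j)\,x^{2j}$ at $x = \tfrac12$, where $\cot\tfrac{\pi}{2}=0$.

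Part (c) is the one requiring a little thought, since --- unlike in (b) --- the expression is not monotone in $j$. I would substitute $a = 2j$ and $b = 2k+2-2j$, so that $a+b = 2k+2$ is fixed and the ratio becomes the symmetric quantity $\tfrac{(a+1)(b+1)}{(a-1)(b-1)}$; differentiating its logarithm in $a$ shows the derivative has the sign of $a-b$, so the ratio is U-shaped in $j$ on $[2,k-1]$, minimal near the midpoint and maximal at the endpoints $j=2$ and $j=k-1$ (equal there by symmetry). Evaluating at an endpoint then reduces the claim to an elementary inequality in $k$. The only mildly delicate points in the whole lemma are this endpoint analysis in (c) and the bookkeeping of strict versus non-strict inequalities in the chain for (b); everything else is routine.
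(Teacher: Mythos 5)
Your treatments of (a), (b) and (d) are correct and complete: the integral comparison $\sum_{m\ge 3}m^{-n}<\int_2^\infty t^{-n}\,dt=\tfrac{2}{n-1}2^{-n}$ gives (a), the chain in (b) is valid (with the strictness supplied by the first two links, since $\zeta(m)>\zeta(2k+2)>1$ for $m=2k+2-2j\le 2k$), and the interchange-and-telescope computation in (d) is fine. Note that the paper does not actually reprove any of this; it simply cites Murty--Smyth--Wang (their Lemmas 4.4--4.6 for (a), (b), (d), and ``along the same lines as'' their Lemma 5.2 for (c)), so a self-contained elementary write-up like yours is a legitimate, indeed more informative, alternative; the underlying estimates are of the same elementary nature as in the cited source.

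The gap is in (c), exactly in the step you deferred. Your symmetrization is sound: with $a=2j$, $b=2k+2-2j$, one gets $\tfrac{d}{da}\log\tfrac{(a+1)(b+1)}{(a-1)(b-1)}=2\left(\tfrac{1}{b^2-1}-\tfrac{1}{a^2-1}\right)$, which has the sign of $a-b$, so the ratio is largest at the endpoints $j=2$ and $j=k-1$, where it equals $\tfrac{5(2k-1)}{3(2k-3)}$. But the ``elementary inequality in $k$'' this leaves you with, namely $\tfrac{5(2k-1)}{3(2k-3)}<\tfrac{25}{9}$, is equivalent to $k>3$. At $k=3$ the admissible range is the single value $j=2$, and there the ratio is exactly $\tfrac{5\cdot 5}{3\cdot 3}=\tfrac{25}{9}$: the strict inequality of part (c) as stated fails (it is an equality) at $(k,j)=(3,2)$, so no argument can deliver ``$<$'' there. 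What your method does prove is the correct statement $\le 25/9$ for all $k\ge 3$ and $2\le j\le k-1$, with strict inequality once $k\ge 4$; the non-strict version is all that is needed where the lemma is used (the bound on $\Delta_{k,\ell}$ in Proposition \ref{delta-upper-bound-prop}, whose other estimates are strict). So you should carry out the endpoint evaluation explicitly and either restrict the strict claim to $k\ge 4$ or weaken it to ``$\le$''; as written, asserting that the reduction ``then'' yields the claim glosses over the $k=3$ equality case.
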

	\begin{proof}
		For proofs of parts (a), (b), and (d), see \cite[Lemma 4.4, Lemma 4.5 and Lemma 4.6]{msw}. The proof of part (c) follows along the same lines as that of \cite[Lemma 5.2]{msw}, but for $k \geq 3$ instead of $k \geq 5$.
	\end{proof}
	
	\begin{lemma}\label{lem:q_j decreasing}
		For $k\ge 2$ and $1\le j\le \floor{\frac{k+1}{2}}$, $q_j=\frac{\zeta(2j)\zeta(2k+2-2j)}{\zeta(2k+2)}$ is decreasing.
	\end{lemma}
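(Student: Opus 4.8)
The plan is to prove the equivalent statement that $q_{j+1} < q_j$ for every $j$ with $1 \le j \le \floor{\frac{k+1}{2}} - 1$ (so that both indices lie in the asserted range). Since $\zeta(2k+2) > 0$, after clearing this common denominator the claim becomes
\begin{equation*}
\zeta(2j+2)\,\zeta(2k-2j) \;<\; \zeta(2j)\,\zeta(2k+2-2j).
\end{equation*}
The first thing to record is that the hypothesis $j \le \floor{\frac{k+1}{2}} - 1$ forces $2(j+1) \le k+1$, i.e. $2j \le k-1$; in particular $2j < k$, and the four zeta-arguments above all exceed $1$ and satisfy $2j < 2j+2 \le 2k-2j < 2k+2-2j$. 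Thus the pair $(2j,\,2k+2-2j)$ and the pair $(2j+2,\,2k-2j)$ have the same sum $2k+2$, with the second nested strictly inside the first. (When $\floor{\frac{k+1}{2}}=1$, e.g. $k=2$, the index range is a single point and there is nothing to prove.)

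With this in hand, the inequality is a consequence of the strict logarithmic convexity of $\zeta$ on $(1,\infty)$. I would establish this via Cauchy--Schwarz: writing $\zeta(s) = \sum_{n\ge1} n^{-s}$ and pairing $n^{-a/2}$ with $n^{-c/2}$ gives $\zeta\big(\tfrac{a+c}{2}\big)^2 \le \zeta(a)\,\zeta(c)$, with equality only if $a=c$, since the sequences $(n^{-a/2})_n$ and $(n^{-c/2})_n$ cannot be proportional otherwise. Strict midpoint log-convexity upgrades (using continuity) to the following two-point inequality: whenever $a < b \le c < d$ and $a+d=b+c$, one has $\zeta(b)\,\zeta(c) < \zeta(a)\,\zeta(d)$ — obtained by writing $b = \lambda a + (1-\lambda)d$ and $c = (1-\lambda)a + \lambda d$ for the appropriate $\lambda \in (0,1)$ and adding the two resulting strict convexity inequalities for $\log\zeta$. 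Taking $a=2j$, $b=2j+2$, $c=2k-2j$, $d=2k+2-2j$ then yields precisely the displayed inequality, and the lemma follows.

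A fully self-contained alternative, if one prefers to avoid invoking log-convexity, is to expand both products as absolutely convergent double Dirichlet series and subtract, which after collecting terms gives $\sum_{m,n\ge1}(m^2-n^2)\,m^{-2j-2}n^{-(2k+2-2j)}$; symmetrizing the summand in $m$ and $n$ exhibits this as $\tfrac12\sum_{m,n\ge1}(m^2-n^2)\big(m^{-2j-2}n^{-(2k+2-2j)} - n^{-2j-2}m^{-(2k+2-2j)}\big)$, and the two factors in each term have matching signs exactly because the exponent difference $(2k+2-2j)-(2j+2)=2k-4j$ is positive when $2j<k$; hence every term is non-negative, and positive when $m\ne n$, so the whole sum is positive.

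I do not anticipate a serious obstacle here: the mathematical content is just the elementary log-convexity (equivalently, rearrangement) estimate for $\zeta$. The only thing that needs genuine care is the bookkeeping around the floor function — checking that ``$q_j$ decreasing on $1 \le j \le \floor{\frac{k+1}{2}}$'' really does reduce to the clean hypothesis $2j \le k-1$ on each consecutive pair of indices, and noting the degenerate case where that index range collapses to a single point — together with making the strict log-convexity (or the sign analysis in the alternative argument) precise.
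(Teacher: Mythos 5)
Your proof is correct, but it follows a genuinely different route from the paper's. The paper argues by calculus on the continuous interpolation $f(x)=\zeta(2x)\,\zeta(2k+2-2x)$: it computes $f'(x)=2f(x)\left(\frac{\zeta'(2x)}{\zeta(2x)}-\frac{\zeta'(2k+2-2x)}{\zeta(2k+2-2x)}\right)$, uses the expansion $\frac{\zeta'(s)}{\zeta(s)}=-\sum_{n\ge1}\Lambda(n)\,n^{-s}$ to see that $\zeta'/\zeta$ is increasing on $(1,\infty)$, concludes that $f$ is increasing on $\left[\frac{k+1}{2},\,k\right]$, and then converts this via the symmetry $q_j=q_{k+1-j}$ into the stated decrease of $q_j$ for $1\le j\le\floor{\frac{k+1}{2}}$. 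You instead compare consecutive integer indices directly: you reduce $q_{j+1}<q_j$ to $\zeta(2j+2)\,\zeta(2k-2j)<\zeta(2j)\,\zeta(2k+2-2j)$, check that the index range forces $2j\le k-1$, so that $(2j+2,\,2k-2j)$ is (weakly) nested inside $(2j,\,2k+2-2j)$ with the same sum, and conclude from strict log-convexity of $\zeta$ via Cauchy--Schwarz; the borderline case $2j+2=2k-2j$, which occurs at the last step when $k$ is odd, is correctly covered by the strict midpoint inequality, and your alternative symmetrized double-Dirichlet-series computation gives the same inequality in a completely elementary way, the sign condition $2k-4j>0$ being exactly your $2j\le k-1$. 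The two arguments rest on the same underlying mechanism --- monotonicity of $\zeta'/\zeta$ is the infinitesimal form of log-convexity of $\zeta$ --- but yours is more self-contained (no von Mangoldt function, no differentiation of Dirichlet series) and works only with the integer points actually needed, while the paper's version is quicker to state and yields monotonicity of the real-variable function $f$ on the whole interval.
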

	\begin{proof}
		For $k \geq 2$, let 
		\(
		f(x)
		=\zeta(2x)\zeta(2k+2-2x).
		\)
		Now 
		\[
		f'(x)
		=2f(x)
		\Bigg(
		\frac{\zeta'(2x)}{\zeta(2x)}
		-\frac{\zeta'(2k+2-2x)}{\zeta(2k+2-2x)}
		\Bigg)
		>0\;\; \text{for}\;\; x\in\left[\frac{k+1}{2},k\right],
		\]
		as \(\frac{\zeta'(x)}{\zeta(x)}\) is increasing for \(x>1\). This follows from the well-known Dirichlet series expansion
		\[
		\frac{\zeta'(x)}{\zeta(x)}
		= \, -\sum_{n=1}^{\infty}
		\frac{\Lambda(n)}{n^x},
		\] 
		where \(\Lambda(n) \) is the von Mangoldt function. Therefore, $f(x)$ is increasing in  $\left[\frac{k+1}{2},k\right]$. Hence, the result follows using $q_j=q_{k+1-j}$ and $\zeta(2k+2)>1$.
	\end{proof}
	
	\begin{lemma}\label{lem:(1+x)^l}
		For all $x\in [0,b]$ and $\ell \ge 1$, we have 
		\[
		(1+x)^{\ell}
		\le 1 \, + \, c_{\ell}(b) \,  x \qquad \text{ with } \qquad c_{\ell}(b)=\frac{(1+b)^{\ell}-1}{b}.
		\]
	\end{lemma}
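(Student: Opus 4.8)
The plan is to reduce the inequality to a monotonicity statement about a difference quotient. For $x>0$, set
\[
h(x) := \frac{(1+x)^{\ell}-1}{x},
\]
so that for $x\in(0,b]$ the claimed inequality is equivalent to $h(x)\le h(b)=c_{\ell}(b)$, while for $x=0$ it degenerates to the trivial equality $1\le 1$. First I would expand, using that $\ell$ is a positive integer, by the binomial theorem:
\[
h(x) = \frac{1}{x}\sum_{j=1}^{\ell}\binom{\ell}{j}x^{j} = \sum_{j=1}^{\ell}\binom{\ell}{j}x^{j-1}.
\]
This exhibits $h$ as a polynomial in $x$ with nonnegative coefficients, hence nondecreasing on $[0,\infty)$. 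Therefore $h(x)\le h(b)$ for every $x\in(0,b]$, and multiplying through by $x\ge 0$ gives $(1+x)^{\ell}-1\le c_{\ell}(b)\,x$, which is the assertion; the case $x=0$ is immediate.

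As an alternative route that avoids using the integrality of $\ell$, I would invoke convexity: for $\ell\ge 1$ the function $f(x)=(1+x)^{\ell}$ has $f''(x)=\ell(\ell-1)(1+x)^{\ell-2}\ge 0$ on $[0,b]$, so $f$ is convex there and consequently lies on or below the secant line joining the endpoints $(0,f(0))=(0,1)$ and $(b,f(b))=(b,(1+b)^{\ell})$. That secant line is exactly $y=1+\frac{(1+b)^{\ell}-1}{b}\,x=1+c_{\ell}(b)\,x$, giving the bound.

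There is no substantive obstacle in this lemma; it is a routine convexity/secant-line estimate. The only minor points to handle with care are the endpoint $x=0$, where the difference quotient $h$ is undefined and the inequality collapses to an equality, and the direction of the inequality when clearing the denominator, which is fine since $x\ge 0$. I would present the binomial-expansion argument as the main proof since it is the shortest and most self-contained.
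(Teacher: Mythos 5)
Your main argument is essentially the paper's own proof: the paper introduces exactly the same difference quotient $f(x)=\frac{(1+x)^{\ell}-1}{x}$ on $(0,b]$, asserts (``using calculus'') that it is increasing, and takes $c_{\ell}(b)=f(b)$. Your binomial expansion simply makes that monotonicity explicit, writing $f(x)=\sum_{j=1}^{\ell}\binom{\ell}{j}x^{j-1}$ as a polynomial with nonnegative coefficients — a cleaner justification than differentiating the quotient, though it uses that $\ell$ is an integer (which is all the paper needs). Your alternative secant-line argument is a genuinely different route: convexity of $(1+x)^{\ell}$ gives the bound directly as the chord over $[0,b]$, works for any real $\ell\ge 1$, and avoids both the binomial theorem and any monotonicity claim. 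Both versions are correct, and the endpoint $x=0$ is handled properly.
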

	\begin{proof}
		Consider the function $f(x)=\frac{(1+x)^{\ell}-1}{x}$ on $ (0,b]$. Using calculus, it is easy to prove that $f(x)$ is a strictly increasing function on $(0,b]$. The result follows by taking $c_{\ell}(b)=f(b)$.
	\end{proof}
	
	\begin{lemma}\label{lem:(1+eps_j)^l}
		For each $k\ge 3$ and $\ell\ge 1$, let $2\le j\le k-1$ and
		\begin{equation}\label{epsilon-def-eqn}
			\varepsilon_{k,j}
			=\left( \frac{2j+1}{2j-1} \right) \, 4^{-j}
			+ \left( \frac{2k+3-2j}{2k+1-2j} \right) \, 4^{-k-1+j}
			+ \left( \frac{(2j+1)(2k+3-2j)}{(2j-1)(2k+1-2j)}\right) \,  4^{-k-1}.
		\end{equation}
		Then we have
		\[
		(1+\varepsilon_{k,j})^{\ell}
		\le 1 \, + \, c_{\ell} \, \varepsilon_{k,j} \quad \text{ where } \quad c_{\ell}=\frac{(1.306)^{\ell}-1}{0.306}.
		\]
	\end{lemma}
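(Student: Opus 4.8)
The plan is to reduce everything to Lemma~\ref{lem:(1+x)^l} applied with $b = 0.306$: since for this value one has $c_\ell(b) = \bigl((1.306)^\ell - 1\bigr)/0.306 = c_\ell$, which is precisely the constant in the statement, it suffices to show that $0 \le \varepsilon_{k,j} \le 0.306$ for every $k \ge 3$ and every $j$ with $2 \le j \le k-1$. The lower bound is immediate, because each of the three summands in \eqref{epsilon-def-eqn} is positive, so the entire content of the lemma is the uniform upper bound $\varepsilon_{k,j} \le 0.306$.

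To obtain that bound I would estimate the three summands of \eqref{epsilon-def-eqn} one at a time. For the first, note that $x \mapsto \frac{2x+1}{2x-1}$ is decreasing on $[2,\infty)$ (its derivative is $-4/(2x-1)^2 < 0$), so $\frac{2j+1}{2j-1} \le \frac{5}{3}$ for $j \ge 2$, while $4^{-j} \le 4^{-2} = \frac{1}{16}$; hence the first summand is at most $\frac{5}{48}$. For the second summand, substitute $i = k+1-j$: since $2 \le j \le k-1$ we also have $2 \le i \le k-1$, and a direct computation shows the summand equals $\bigl(\frac{2i+1}{2i-1}\bigr)4^{-i}$, which by the same argument is at most $\frac{5}{48}$. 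For the third summand, part~(c) of Lemma~\ref{bounds-lemma} gives $\frac{(2j+1)(2k+3-2j)}{(2j-1)(2k+1-2j)} < \frac{25}{9}$ — valid in exactly the range $k \ge 3$, $2 \le j \le k-1$ — while $4^{-k-1} \le 4^{-4} = \frac{1}{256}$, so this summand is less than $\frac{25}{2304}$.

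Adding the three estimates then yields $\varepsilon_{k,j} < \frac{5}{48} + \frac{5}{48} + \frac{25}{2304} = \frac{505}{2304} < 0.22 < 0.306$, and applying Lemma~\ref{lem:(1+x)^l} on $[0,0.306]$ completes the argument. There is no genuine obstacle here; the only point needing a moment's care is exploiting the $j \leftrightarrow k+1-j$ symmetry of $\varepsilon_{k,j}$ so that the second summand is handled exactly like the first, together with verifying that the ranges of $j$ and $k$ are precisely those for which part~(c) of Lemma~\ref{bounds-lemma} was stated. Note finally that the bound $0.306$ is not optimal but merely convenient, and that it is uniform in $\ell$ — as it must be, since $\ell$ does not occur in the definition of $\varepsilon_{k,j}$.
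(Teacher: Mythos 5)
Your proposal is correct, and it follows the same overall reduction as the paper: both arguments apply Lemma~\ref{lem:(1+x)^l} with $b=0.306$, so everything comes down to the uniform bound $\varepsilon_{k,j}\le 0.306$. Where you differ is in how that bound is verified. The paper asserts (``treating this as a calculus problem'') that $\max_{2\le j\le k-1}\varepsilon_{k,j}=\varepsilon_{k,2}$ and then checks $\varepsilon_{k,2}\le 0.306$ for $k\ge 3$, leaving the maximization to the reader. You instead bound the three summands of \eqref{epsilon-def-eqn} separately: the monotonicity of $x\mapsto\frac{2x+1}{2x-1}$ gives $\frac{5}{48}$ for the first term, the substitution $i=k+1-j$ (which indeed turns the second summand into $\frac{2i+1}{2i-1}4^{-i}$ with $2\le i\le k-1$) gives $\frac{5}{48}$ for the second, and Lemma~\ref{bounds-lemma}(c) together with $4^{-k-1}\le 4^{-4}$ gives $\frac{25}{2304}$ for the third, so $\varepsilon_{k,j}<\frac{505}{2304}<0.22<0.306$. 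This termwise argument is more self-contained (it avoids the unproved claim that the maximum occurs at $j=2$) and in fact yields a sharper numerical bound, though since the constant $c_\ell$ in the statement is tied to $b=0.306$, the extra sharpness is not exploited; the paper's route, by locating the maximizing $j$, explains where the constant $0.306$ comes from. Your verification that the hypotheses of Lemma~\ref{bounds-lemma}(c) match the range $k\ge 3$, $2\le j\le k-1$ is exactly the point that needed care, and you handled it correctly.
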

	\begin{proof}
		Treating this as a calculus problem involving a function of $j$, one can prove that 
		\[
		\max_{2\le j\le k-1} \varepsilon_{k,j}=\varepsilon_{k,2}.
		\]
		For $k\ge 3$, we have $\varepsilon_{k,2}\le 0.306$. Thus, the result follows from Lemma \ref{lem:(1+x)^l} by putting ${b=0.306}$. 
	\end{proof}
	
	\medskip 
	
	\section{\bf Real zeros of $R_{k,\ell}(z)$}
	\bigskip 
	
	We focus on understanding the properties of the real zeros of $R_{k,\ell}(z)$ in this section. 
	
	\begin{proposition}\label{plus-minus-1-as-zero-prop}
		Let $k$ and $\ell$ be positive integers with $k$-even. Then $R_{k,\ell}(1)=0$ if $\ell$ is even and $R_{k,\ell}(-1)=0$ if $\ell$ is odd.
	\end{proposition}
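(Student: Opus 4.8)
The plan is to exploit the reciprocal (self-dual) structure of $R_{k,\ell}(x)$ together with the parity of $k+1$. Write $a_j := {(-1)}^{(\ell+1)j}\big(\tfrac{B_{2j}}{(2j)!}\,\tfrac{B_{2k+2-2j}}{(2k+2-2j)!}\big)^{\ell}$ for the coefficient of $x^j$. First I would record that, since the product $\tfrac{B_{2j}}{(2j)!}\,\tfrac{B_{2k+2-2j}}{(2k+2-2j)!}$ is invariant under $j\mapsto k+1-j$ and ${(-1)}^{(\ell+1)(k+1-j)}={(-1)}^{(\ell+1)(k+1)}{(-1)}^{(\ell+1)j}$, one has $a_{k+1-j}={(-1)}^{(\ell+1)(k+1)}\,a_j$, which gives the functional equation $x^{k+1}R_{k,\ell}(1/x)={(-1)}^{(\ell+1)(k+1)}R_{k,\ell}(x)$ already mentioned in the introduction. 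When $k$ is even, $k+1$ is odd, so the sign ${(-1)}^{(\ell+1)(k+1)}$ is simply ${(-1)}^{\ell+1}$, that is, $-1$ when $\ell$ is even and $+1$ when $\ell$ is odd.

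Next I would specialize the functional equation. If $\ell$ is even, evaluating at $x=1$ yields $R_{k,\ell}(1)=-R_{k,\ell}(1)$, hence $R_{k,\ell}(1)=0$. If $\ell$ is odd, evaluating at $x=-1$ and using that $k+1$ is odd gives ${(-1)}^{k+1}R_{k,\ell}(-1)=R_{k,\ell}(-1)$, i.e. $-R_{k,\ell}(-1)=R_{k,\ell}(-1)$, hence $R_{k,\ell}(-1)=0$. Equivalently, and perhaps more transparently, one can pair the $j$-th and $(k+1-j)$-th terms directly in the defining sum: because $k+1$ is odd there is no fixed central index, so the index set $\{0,1,\dots,k+1\}$ splits into disjoint pairs $\{j,k+1-j\}$, and for each pair the relation $a_{k+1-j}=-a_j$ (resp. $a_{k+1-j}=a_j$) forces the contribution to $R_{k,\ell}(1)$ (resp. to $R_{k,\ell}(-1)$) to vanish.

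There is no genuine obstacle here; the computation is elementary. The only points requiring care are correctly tracking the sign ${(-1)}^{(\ell+1)(k+1)}$ through the symmetry $j\mapsto k+1-j$, and using the hypothesis that $k$ is even precisely to ensure $k+1$ is odd, so that no unpaired middle term survives and the two evaluations $x=1$, $x=-1$ behave oppositely with respect to this sign.
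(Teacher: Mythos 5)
Your argument is correct and rests on the same idea as the paper's proof: the symmetry of the coefficients under $j \mapsto k+1-j$ (equivalently $q_j = q_{k+1-j}$ in the paper's normalized form), combined with $k+1$ being odd, which is exactly what makes the paired terms cancel at $x=1$ for $\ell$ even and at $x=-1$ for $\ell$ odd. The paper simply states this cancellation as ``evident'' after rewriting the polynomial, whereas you make the sign bookkeeping explicit via the functional equation $x^{k+1}R_{k,\ell}(1/x)={(-1)}^{(\ell+1)(k+1)}R_{k,\ell}(x)$; the content is the same.
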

	\begin{proof}
		From \eqref{eq:M_{k,l}}, we see that
		\begin{equation*}
			R_{k,\ell}(z) = z^{k+1} + {(-1)}^{(k+1)(\ell +1)} - 2^{\ell} \sum_{j=1}^{k} {(-1)}^{(\ell+1)(k+j)} \, q_j^{\ell} \, z^j.
		\end{equation*}
		The claim is now evident since $q_j = q_{k+1-j}$.
	\end{proof}

	\noindent We gain further understanding about the zeros of $R_{k,\ell}(z)$ by considering the polynomial $M_{k,\ell}(z)$.
	\begin{proposition}\label{l-odd-real-zero-prop}
		Let $k$ and $\ell$ be positive integers with $\ell$ odd. Then, $\mkl(z)$ has exactly four real zeros, of the form $\beta_{k,\ell}$, $-\beta_{k,\ell}$, $1/\beta_{k,\ell}$ and $-1/\beta_{k,\ell}$ with $\beta_{k,\ell}>1$.
	\end{proposition}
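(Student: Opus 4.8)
The plan is to reduce the counting of real zeros to a single monotonicity statement about a ratio of hyperbolic cosines, which makes the argument completely uniform in $\ell$. Since $\ell$ is odd, $(\ell+1)$ is even and every factor $(-1)^{(\ell+1)(\cdot)}$ in \eqref{eq:M_{k,l}} equals $1$, so
\[
\mkl(z) = z^{2k+2} + 1 - 2^{\ell}\sum_{j=1}^{k} q_j^{\ell}\, z^{2j}.
\]
This is an even polynomial with $\mkl(0)=1\neq 0$, so its real zeros are symmetric about the origin and it suffices to locate the positive ones. For $x>0$ write $x=e^{s}$ with $s\in\mathbb{R}$, divide by $x^{k+1}$, and use the symmetry $q_j=q_{k+1-j}$ (which, after averaging over $j\leftrightarrow k+1-j$, turns $e^{(2j-k-1)s}$ into $\cosh((2j-k-1)s)$); one obtains $\mkl(e^{s})\,e^{-(k+1)s}=\Phi(s)$, where
\[
\Phi(s) := 2\cosh((k+1)s) - 2^{\ell}\sum_{j=1}^{k} q_j^{\ell}\,\cosh((2j-k-1)s).
\]
Thus the positive real zeros of $\mkl$ correspond, via $x=e^{s}$, to the zeros of the even function $\Phi$, and it is enough to prove that $\Phi$ has exactly one zero in $(0,\infty)$.

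For this I would factor out the positive quantity $2\cosh((k+1)s)$ and write $\Phi(s)=2\cosh((k+1)s)\big(1-2^{\ell-1}\,\psi(s)\big)$, where
\[
\psi(s) := \sum_{j=1}^{k} q_j^{\ell}\;\frac{\cosh((2j-k-1)s)}{\cosh((k+1)s)},
\]
so that on $(0,\infty)$ the zeros of $\Phi$ are precisely the solutions of $\psi(s)=2^{1-\ell}$. The heart of the matter is the elementary lemma that for reals $0\le a<b$ the function $s\mapsto\cosh(as)/\cosh(bs)$ is \emph{strictly decreasing} on $(0,\infty)$: the numerator of its derivative is $a\sinh(as)\cosh(bs)-b\cosh(as)\sinh(bs)$, which by the product-to-sum identities equals $\tfrac{a-b}{2}\sinh((a+b)s)+\tfrac{a+b}{2}\sinh((a-b)s)<0$ for $s>0$. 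Since $|2j-k-1|\le k-1<k+1$ for $1\le j\le k$, each summand of $\psi$ is a positive multiple of such a strictly decreasing function, so $\psi$ is strictly decreasing on $(0,\infty)$.

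It then remains only to check the endpoint behaviour. At $s=0$ we have $\psi(0)=\sum_{j=1}^{k}q_j^{\ell}>1\ge 2^{1-\ell}$, because each $q_j=\zeta(2j)\zeta(2k+2-2j)/\zeta(2k+2)>1$ (indeed $\zeta(2k+2-2j)>1$ and $\zeta(2j)>\zeta(2k+2)$), while $\psi(s)\to 0$ as $s\to\infty$ since the largest exponent occurring in the numerator, $k-1$, is strictly less than $k+1$. A continuous, strictly decreasing function that starts above $2^{1-\ell}$ and decreases to $0$ attains the value $2^{1-\ell}$ at exactly one point $s_0\in(0,\infty)$. Consequently $\Phi$ has exactly the two zeros $\pm s_0$ (the value $s=0$ is excluded since $\Phi(0)=2-2^{\ell}\sum q_j^{\ell}<0$), and hence $\mkl$ has exactly the four real zeros $\pm\beta_{k,\ell}$ and $\pm 1/\beta_{k,\ell}$, where $\beta_{k,\ell}:=e^{s_0}>1$; these are simple because $\Phi'(s_0)=-2^{\ell}\cosh((k+1)s_0)\,\psi'(s_0)\neq 0$. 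I do not anticipate a genuine obstacle here: the whole argument rests on the monotonicity lemma, and its virtue is that it uses only $q_j^{\ell}>0$ and $2^{1-\ell}\le 1$, so it is insensitive to the size of $\ell$ — precisely the uniformity in $\ell$ that the introduction flags as the main source of difficulty.
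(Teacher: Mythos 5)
Your proof is correct, but it follows a genuinely different route from the paper. The paper's argument is very short: for odd $\ell$ all signs in $\mkl(z)=z^{2k+2}+1-2^{\ell}\sum_{j=1}^{k}q_j^{\ell}z^{2j}$ are fixed, so the coefficient sequence has exactly two sign changes and Descartes' rule of signs caps the number of positive zeros at two; the inequality $\mkl(1)=2-2^{\ell}\sum_j q_j^{\ell}<0$ together with the intermediate value theorem and the even reciprocal symmetry then produces the zeros $\pm\beta_{k,\ell}$, $\pm 1/\beta_{k,\ell}$ with $\beta_{k,\ell}>1$ and shows there are no others. You instead substitute $z=e^{s}$, exploit $q_j=q_{k+1-j}$ to write $e^{-(k+1)s}\mkl(e^{s})$ as a hyperbolic-cosine combination, and reduce everything to the elementary lemma that $\cosh(as)/\cosh(bs)$ is strictly decreasing on $(0,\infty)$ for $0\le a<b$, so that the relevant equation $\psi(s)=2^{1-\ell}$ has a unique positive solution; your checks of the boundary behaviour ($\psi(0)>1$, $\psi(\infty)=0$, $\Phi(0)<0$) and of the case $2j-k-1=0$ all go through. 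What each approach buys: the paper's Descartes argument is shorter and equally insensitive to $\ell$ (the uniformity-in-$\ell$ difficulty the introduction refers to really enters the unit-circle count, not this proposition), while your monotonicity argument gives slightly more for free --- simplicity of the real zeros via $\psi'(s_0)<0$, and a strictly monotone function $\psi$ whose evaluation at explicit points could also be used to localize $\beta_{k,\ell}$ in the spirit of the paper's Theorem \ref{real-zero-location-aux-thm}.
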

	\begin{proof}
		For odd positive integer $\ell$, we have
		\[
		\mkl(z)           
		=z^{2k+2} + 1 
		-2^{\ell}
		\sum_{j=1}^{k}
		q_j^{\ell}
		z^{2j}.
		\] 
		Therefore $\mkl(1)
		=2-2^{\ell}
		\sum_{j=1}^{k}
		q_j^{\ell}
		<0$, as $q_j >1$ for  {$j=1,\ldots, k$}. Also, $\mkl(z)$ is positive for $z$ real, positive, and sufficiently large. Thus, by the intermediate value theorem, it has a real zero, $\beta_{k,\ell}$ say, greater than $1$. Since $M_{k,\ell}(z)$ is an even reciprocal polynomial, $-\beta_{k,\ell}$, $1/\beta_{k,\ell}$ and $-1/\beta_{k,\ell}$ are also zeros of $\mkl(z)$, and all of them are distinct. On the other hand, by Descartes’ Rule of Signs, we see that $\mkl(z)$ can have at most $2$ positive zeros. This completes the proof.
	\end{proof}
	
	In the next proposition, we study the real zeros of $\mkl(z)$ when $\ell$ is even. 
	\begin{proposition}\label{l-even-real-zero-prop}
		Let $k$ and $\ell$ be positive integers with $\ell$ even. 
		\begin{enumerate}
			\item[(a)] If $k$ is odd, then $\mkl(z)$ has at least four real zeros of the form $\beta_{k,\ell}$, $-\beta_{k,\ell}$, $1/\beta_{k,\ell}$ and $-1/\beta_{k,\ell}$ with $\beta_{k,\ell}>1$.
			\item[(b)] If $k$ is even, then $\mkl(z)$ has at least six real zeros of the form $-1$, $1$, $\beta_{k,\ell}$, $-\beta_{k,\ell}$, $1/\beta_{k,\ell}$ and $-1/\beta_{k,\ell}$ with $\beta_{k,\ell}>1$.
		\end{enumerate}
	\end{proposition}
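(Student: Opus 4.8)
The strategy in both parts is the same: exhibit one real zero $\beta_{k,\ell}>1$ of $\mkl(z)$ and then use that $\mkl$ is even and reciprocal, so that $z^{2k+2}\mkl(1/z)=\pm\mkl(z)$, to conclude that $-\beta_{k,\ell}$, $1/\beta_{k,\ell}$ and $-1/\beta_{k,\ell}$ are also zeros; since $\beta_{k,\ell}>1$ these four are distinct, and in part~(b) they are moreover distinct from the two zeros $\pm1$ supplied by Proposition~\ref{plus-minus-1-as-zero-prop}. Specializing \eqref{eq:M_{k,l}} to even $\ell$, so that $(-1)^{(\ell+1)m}=(-1)^m$, gives
\[
\mkl(z)=z^{2k+2}+1+2^{\ell}\sum_{j=1}^{k}(-1)^j q_j^{\ell}z^{2j}\quad(k\text{ odd}),
\qquad
\mkl(z)=z^{2k+2}-1-2^{\ell}\sum_{j=1}^{k}(-1)^j q_j^{\ell}z^{2j}\quad(k\text{ even}).
\]
In both cases $\mkl(z)\to+\infty$ as $z\to+\infty$. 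So in part~(a) it suffices to prove $\mkl(1)<0$, whence the intermediate value theorem yields a zero in $(1,\infty)$. In part~(b) one has $\mkl(\pm1)=0$ already, so $N(z):=\mkl(z)/(z^2-1)$ is an even monic polynomial of degree $2k$ with $N(z)\to+\infty$ and, by l'Hôpital, $N(1)=\mkl'(1)/2$; thus it suffices to prove $\mkl'(1)<0$, whence the intermediate value theorem yields a zero of $N$, hence of $\mkl$, in $(1,\infty)$.

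For part~(a) I would write $\mkl(1)=2+2^{\ell}\sum_{j=1}^{k}(-1)^j q_j^{\ell}$ and fold the sum using $q_j=q_{k+1-j}$ and the evenness of $k+1$ to get $\mkl(1)=2+2^{\ell}\bigl(2\sum_{j=1}^{(k-1)/2}(-1)^j q_j^{\ell}+(-1)^{(k+1)/2}q_{(k+1)/2}^{\ell}\bigr)$. By Lemma~\ref{lem:q_j decreasing}, $q_1^{\ell}>q_2^{\ell}>\dots>q_{(k+1)/2}^{\ell}$ is a positive strictly decreasing sequence, so for $k\ge3$ the bracketed alternating sum is $\le-(q_1^{\ell}-q_2^{\ell})$ while the unpaired term has absolute value $q_{(k+1)/2}^{\ell}\le q_2^{\ell}$; hence $\sum_{j=1}^{k}(-1)^j q_j^{\ell}\le-2q_1^{\ell}+3q_2^{\ell}$. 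It then remains to verify $2q_1^{\ell}-3q_2^{\ell}>2^{1-\ell}$, which I would derive from $q_1>\zeta(2)$ and the estimate $q_2/q_1<(\zeta(4)/\zeta(2))\bigl(1+3\cdot4^{1-k}\bigr)$ coming from Lemma~\ref{bounds-lemma}(b) — these keep $q_1/q_2$ above a constant $>1$ for all large $k$, while $q_2\ge\zeta(4)$ — with the small cases $k=1$ (where $q_1=5/2$, so $\mkl(1)=2-5^{\ell}<0$) and $k=3$ (where $q_1=5/3$, $q_2=7/6$) checked by direct substitution.

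For part~(b) I would use $\mkl'(1)=(2k+2)-2^{\ell+1}\sum_{j=1}^{k}(-1)^j j\,q_j^{\ell}$ and fold via $q_j=q_{k+1-j}$ and the oddness of $k+1$ to get $\sum_{j=1}^{k}(-1)^j j\,q_j^{\ell}=\sum_{j=1}^{k/2}(-1)^{j+1}(k+1-2j)q_j^{\ell}$. By Lemma~\ref{lem:q_j decreasing} the terms $b_j:=(k+1-2j)q_j^{\ell}$ are positive and strictly decreasing for $1\le j\le k/2$, so for $k\ge4$ this alternating sum is $\ge b_1-b_2=(k-1)q_1^{\ell}-(k-3)q_2^{\ell}$. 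Lemma~\ref{bounds-lemma}(b) gives $q_2/q_1<(\zeta(4)/\zeta(2))\bigl(1+3\cdot4^{1-k}\bigr)<2^{-1/2}$ for $k\ge4$, so $(k-3)q_2^{\ell}<\tfrac12(k-1)q_1^{\ell}$ for even $\ell\ge2$, and hence $b_1-b_2>\tfrac12(k-1)q_1^{\ell}>\tfrac12(k-1)\zeta(2)^{\ell}$ by $q_1>\zeta(2)$; multiplying by $2^{\ell+1}$ gives $2^{\ell+1}\sum_{j=1}^{k}(-1)^j j\,q_j^{\ell}>(k-1)(2\zeta(2))^{\ell}>2k+2$ for $k\ge4$, so $\mkl'(1)<0$. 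The case $k=2$, where $q_1=q_2=7/4$ and $\mkl'(1)=6-2(7/2)^{\ell}<0$, is immediate.

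The main obstacle is exactly making the two sign inequalities $\mkl(1)<0$ and $\mkl'(1)<0$ hold uniformly in both $k$ and $\ell$: the folded alternating sums must be handled using only the symmetry $q_j=q_{k+1-j}$, the monotonicity of Lemma~\ref{lem:q_j decreasing}, and the quantitative bounds of Lemma~\ref{bounds-lemma}, with the smallest values of $k$ treated separately since there $q_1$ and $q_2$ lie closest together. The remainder is routine: the intermediate value theorem produces $\beta_{k,\ell}>1$, and the rest of the real zeros — and their distinctness, including from $\pm1$ in part~(b) — follow from $\mkl$ being even and reciprocal together with $\beta_{k,\ell}>1$.
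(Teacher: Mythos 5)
Your proposal is correct and follows essentially the same route as the paper: prove $\mkl(1)<0$ when $k$ is odd and $\mkl'(1)<0$ when $k$ is even (after noting $\mkl(\pm1)=0$ and passing to $\mkl(z)/(z^2-1)$), obtain $\beta_{k,\ell}>1$ by the intermediate value theorem, and get the remaining real zeros and their distinctness from the even reciprocal structure. The only divergence is bookkeeping in part (a): the paper pairs consecutive terms of the folded alternating sum so that it is $<-1$ using only Lemma~\ref{lem:q_j decreasing} and $q_j>1$ (no quantitative zeta estimates or small-$k$ cases, since $2^{\ell}\ge 4$), whereas your bound $-2q_1^{\ell}+3q_2^{\ell}$ forces the extra $q_1/q_2$ estimates and the $k=1,3$ checks — these do go through, but are avoidable.
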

	
	\begin{proof}
		For $k$ odd and $\ell$ even, 
		\[
		\mkl(z)           
		=z^{2k+2} + 1 
		+2^{\ell}
		\sum_{j=1}^{k}
		(-1)^j
		q_j^{\ell}
		z^{2j},
		\]
		which gives 
		\(
		\mkl(1)
		=2 
		+2^{\ell}
		\sum_{j=1}^{k}
		(-1)^j
		q_j^{\ell}. 
		\)
		Let 
		\(
		\gkl
		=\sum_{j=1}^{k}
		(-1)^j
		q_j^{\ell}
		.
		\)
		As \(k\) is odd and \(q_j=q_{k+1-j}\), we get
		\[
		\gkl
		=2 \left( \sum_{j=1}^{\frac{k-1}{2}}
		(-1)^j
		q_j^{\ell} \right)
		+(-1)^{\frac{k+1}{2}} \, 
		q_{\frac{k+1}{2}}^{\ell}.
		\]
		If  $k\equiv 1\pmod{4}$, we have
		\[
		\gkl
		= \, -2 \left[ \sum_{j=1}^{\frac{k-1}{4}}
		\bigg(
		q_{2j-1}^{\ell}
		-q_{2j}^{\ell}
		\bigg)  \right]          
		- q_{\frac{k+1}{2}}^{\ell},
		\]
		and if $k\equiv 3\pmod{4}$, we have
		\[
		\gkl
		= \, -2 \left[ \sum_{j=1}^{\frac{k-3}{4}}
		\bigg(
		q_{2j-1}^{\ell}
		-q_{2j}^{\ell}
		\bigg) \right] 
		-\bigg(
		q_{\frac{k-1}{2}}^{\ell}
		-q_{\frac{k+1}{2}}^{\ell}
		\bigg) 
		-q_{\frac{k-1}{2}}^{\ell}.
		\]
		In both the above cases, using Lemma \ref{lem:q_j decreasing} and the fact that $q_j>1$ for all {$j=1,\ldots,k$}, we obtain that $\gkl<-1$. Therefore 
		\(    
		\mkl(1)
		=2 
		+2^{\ell}
		\gkl  
		<0
		\)
		as {$\ell\ge2$}. Now $\mkl(z)$ is positive for $z$ real, positive, and sufficiently large. Thus, by the intermediate value theorem, it has a real zero, $\beta_{k,\ell}$ say, greater than $1$. As $\mkl(z)$ is an even reciprocal polynomial, $-\beta_{k,\ell}$ and $\pm 1/\beta_{k,\ell}$ are also distinct real zeros of $\mkl(z)$. This proves part (a). \\
		
		Now assume that $k$ is even. In this case, the polynomial under consideration becomes
		\[
		\mkl(z)           
		=z^{2k+2} - 1 
		-2^{\ell}
		\sum_{j=1}^{k}
		(-1)^j
		q_j^{\ell}
		z^{2j},
		\]
		which implies that
		\(
		\mkl(\pm 1)
		=0 
		\),
		by the symmetry $q_j=q_{k+1-j}$. Thus, we need to study the sign of the derivative of $\mkl(z)$ at $z=1$.\\
		
		\noindent From the above expression, we have
		\[
		\mkl'(z)
		=(2k+2)z^{2k+1}
		-2^{\ell+1}
		\sum_{j=1}^{k}
		(-1)^j \,
		j \, 
		q_j^{\ell} \, 
		z^{2j-1},
		\]
		which gives
		\begin{equation}\label{eq:mkl' and bkl}
			\mkl'(1)=(2k+2)(1-\hkl),
		\end{equation} 
		where
		$$
		\hkl
		=\frac{2^{\ell}}{k+1}
		\sum_{j=1}^{k}
		(-1)^j \,
		j \,
		q_j^{\ell}.
		$$
		As $k$ is even and $q_j=q_{k+1-j}$, we can rewrite this as
		\begin{equation*}\label{eq:bkl in sum}
			\hkl
			=\frac{2^{\ell}}{k+1}
			\sum_{j=1}^{k/2}
			(-1)^{j+1} \,
			(k+1-2j) \, 
			q_j^{\ell}.
		\end{equation*}
		
		\noindent If $k \equiv 0 \pmod 4$, we have
		\begin{equation*}\label{eq:bkl for k congruent to 0}
			\begin{split}
				\hkl
				=\frac{2^{\ell}}{k+1}
				\sum_{j=1}^{k/4}
				\Bigg(
				(k+3-4j) \, q_{2j-1}^{\ell}
				-(k+1-4j) \, q_{2j}^{\ell}
				\Bigg)
			\end{split}
		\end{equation*}     
		and if $k \equiv 2 \pmod 4$, we have
		\begin{equation*}\label{eq:bkl for k congruent to 2}
			\hkl
			=\frac{2^{\ell}}{k+1}
			\sum_{j=1}^{(k-2)/4}
			\Big(
			(k+3-4j)\, q_{2j-1}^{\ell}
			-(k+1-4j) \, q_{2j}^{\ell}
			\Big)
			+q_{\frac k2}^{\ell}
			.
		\end{equation*}
		In both cases, using Lemma \ref{lem:q_j decreasing} and the fact that $q_{\frac k2}>1$, we deduce that
		\begin{equation}\label{eq:lower bound of bkl}
			\hkl > {2}^{\ell} \, \left( \frac{k-3}{k+1} \right) \, \left( q_1^{\ell} - q_2^{\ell} \right) \, \geq \, 4 \,q_2^2 \, \left( \frac{k-3}{k+1} \right) \, \left( \left( \frac{q_1}{q_2} \right)^2 - 1 \right)
		\end{equation}
		as $\ell\ge 2$ and $q_1/q_2 > 1$. Using Lemma \ref{bounds-lemma}, equation \eqref{lem:inequalities of zeta(n)}, we have
		\begin{equation}\label{eq:lower bound of q_1/q_2}
			\frac{q_1}{q_2}=\frac{\zeta(2)\zeta(2k)}{\zeta(4)\zeta(2k-2)}
			\ge \left(\frac{\zeta(2)}{\zeta(4)} \right) \, \left(
			\frac{1+4^{-k}}{1+ \left(\frac{2k-1}{2k-3}\right) \,  4^{-k+1}} \right)
			=\frac{15}{\pi^2} \, \times \, \left( 
			\frac{1+4^{-k}}{1+ \left(\frac{2k-1}{2k-3}\right) 4^{-k+1}}\right) .     
		\end{equation}
		Using \eqref{eq:lower bound of q_1/q_2} in \eqref{eq:lower bound of bkl}, we get
		\[
		\begin{split}
			\hkl
			&> \, 4 \, q_2^2 \,
			\left( \frac{k-3}{k+1} \right) \,
			\left[
			\left( \frac{15}{\pi^2} \, \times \, 
			\frac{1+4^{-k}}{1+ \left(\frac{2k-1}{2k-3}\right) 4^{-k+1}}\right)^2
			-1
			\right]\\
			& > \, 4 \, \zeta(4)^2 \,
			\left( \frac{k-3}{k+1} \right) \,
			\left[
			\left( \frac{15}{\pi^2} \, \times \, 
			\frac{1+4^{-k}}{1+ \left(\frac{2k-1}{2k-3}\right) 4^{-k+1}}\right)^2
			-1
			\right]\\
			&>1.5
		\end{split}
		\]
		for $k\ge 4$, using the facts that
		\(
		\frac{k-3}{k+1}
		\)
		and 
		\(
		\frac{1+4^{-k}}{1+ \left(\frac{2k-1}{2k-3}\right) 4^{-k+1}} 
		\)
		are increasing functions of $k$. For $k=2$,  
		$$H_{2, \ell}=\frac{(2\zeta(2)\zeta(4))^{\ell}}{3\zeta(6)^{\ell}}>1$$ 
		for all $\ell$. Together with \eqref{eq:mkl' and bkl}, this implies that
		\(\mkl'(1)<0\) for all $k$ even and $\ell$ even. 
		
		\noindent Now, let $\pkl(z)=\frac{\mkl(z)}{z^2-1}$. Then all the zeros of $\mkl(z)$ other than $\pm 1$ are the zeros of $\pkl(z)$. Also, $\pkl(1)=\mkl'(1)<0$ and $\pkl(z)$ is positive for $z$ real, positive, and sufficiently large. Thus, by the intermediate value theorem, $\pkl(z)$ (and hence, $\mkl(z)$) has a real zero, $\beta_{k,\ell}$ say, greater than 1. As $\mkl(z)$ is an even reciprocal polynomial, $-\beta_{k,\ell}$ and $\pm 1/\beta_{k,\ell}$ are also zeros of $\mkl(z)$. This completes the proof.
	\end{proof}
	
	In order to show that these are the only real zeros of $M_{k,\ell}(z)$ when $\ell$ is even, we will prove that the remaining zeros must lie on the unit circle. This is achieved in the next section. \\
	
	We now give the proof of Theorem \ref{main-thm-real-l-odd}. This is equivalent to proving the following statement for the polynomial $\mkl(z)$.
	\begin{theorem}\label{real-zero-location-aux-thm}
		For  positive integers $k$ and $\ell$ with $\ell$ odd and $k \geq 2$, let $\beta_{k,\ell}$ denote the largest real zero of $\mkl(z)$. Then  
		$$ \beta_{k,\ell} \in 
		\Bigg(
		\left(\frac{2 \, \zeta(2) \, \zeta(2k)}{\zeta(2k+2)}\right)^{\ell/2}, \;\Big(2\zeta(2)\Big)^{\ell/2}\sqrt{\frac{2}{\zeta(2)}\Big(1+\frac{3 \, d_{\ell}}{4^k}}\Big)
		\Bigg),
		$$
		where 
		\(
		d_{\ell}
		=\frac{(1.75)^{\ell}-1}{0.75}.
		\)
	\end{theorem}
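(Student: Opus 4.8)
The plan is to locate $\beta_{k,\ell}$, the largest real zero of $\mkl(z) = z^{2k+2} + 1 - 2^{\ell}\sum_{j=1}^k q_j^{\ell} z^{2j}$ (for $\ell$ odd), by producing sign changes of $\mkl$ at two explicit points and then using that all real zeros greater than $1$ coincide (there is exactly one, by Proposition \ref{l-odd-real-zero-prop}). Concretely, writing $L := (2\zeta(2)\zeta(2k)/\zeta(2k+2))^{\ell/2}$ for the claimed lower endpoint, I would show $\mkl(L) < 0$, and writing $U := (2\zeta(2))^{\ell/2}\sqrt{(2/\zeta(2))(1 + 3d_\ell/4^k)}$ for the claimed upper endpoint, I would show $\mkl(U) > 0$. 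Since $\mkl$ is eventually positive and $\mkl(1) < 0$, the intermediate value theorem combined with the at-most-two-positive-zeros bound from Descartes' rule then forces $\beta_{k,\ell} \in (L, U)$. A preliminary reduction I would record: $\beta_{k,\ell}^2$ is the largest real zero of $R_{k,\ell}$ viewed in the variable $z^2$, and the dominant term in $\mkl$ near $\beta_{k,\ell}$ is governed by the two endpoint terms $j = k$ (equivalently $j=1$) of the sum, since $q_1 = q_k$ is the largest $q_j$; this is why $2^{\ell} q_1^{\ell} = (2\zeta(2)\zeta(2k)/\zeta(2k+2))^{\ell}$ enters.

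For the lower bound, the clean approach is to isolate a single negative term. At $z^2 = t$ with $t = 2^{\ell}q_1^{\ell} = L^2$, the term $-2^{\ell} q_1^{\ell} t^{1} = -(2^\ell q_1^\ell)^2 = -t^2$ in the expansion of $\mkl$ (taking $j=1$ there) is already of size $t^2$, and I would argue that this, together with the $j=k$ term and the remaining strictly negative contributions, outweighs $t^{k+1} + 1$ — i.e. $\mkl(\sqrt t) < 0$. The cleanest way is probably to keep only the $j=1$ and $j=k$ terms: $\mkl(\sqrt t) \le t^{k+1} + 1 - 2^{\ell}q_1^{\ell} t^k - 2^{\ell}q_1^{\ell} t = t(t^k - 2^\ell q_1^\ell)(\cdots) + \cdots$, so that $t \le 2^\ell q_1^\ell$ (which holds with equality at $t = L^2$) forces the sign; one must check $t^k \le 2^\ell q_1^\ell$, i.e. $(2^\ell q_1^\ell)^{k-1} \ge 1$, which is immediate since $q_1 > 1$. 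I would write this out carefully, discarding the middle terms $2 \le j \le k-1$ which are all negative.

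For the upper bound — which I expect to be the genuine obstacle — the point is to show the positive term $z^{2k+2}$ dominates the whole sum once $z^2 \ge U^2$. I would bound $2^{\ell}\sum_{j=1}^k q_j^{\ell} z^{2j} \le 2^{\ell} z^{2k} \sum_{j=1}^k q_j^{\ell} z^{-2(k-j)}$ and then, using $q_j \le q_1$ for all $j$ and $z^2 \ge 1$, crudely by $2^{\ell} q_1^{\ell} z^{2k} \sum_{j=1}^k z^{-2(k-j)} \le 2^\ell q_1^\ell z^{2k} \cdot \frac{z^2}{z^2 - 1}$; this is too lossy, so instead I would exploit the decay of $q_j$ away from the endpoints. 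The right tool is Lemma \ref{bounds-lemma}(b): $\zeta(2k+2-2j)/\zeta(2k+2) < 1 + 3\cdot 4^{j - (k+1)}$, giving $q_j = \zeta(2j)\cdot\zeta(2k+2-2j)/\zeta(2k+2) < \zeta(2j)(1 + 3\cdot 4^{j-k-1})$, and then Lemma \ref{lem:(1+x)^l} to get $q_j^\ell < \zeta(2j)^\ell(1 + d_\ell\, 4^{j-k-1})$ where $d_\ell = ((1.75)^\ell - 1)/0.75$ matches the constant in the statement (note $1 + 3\cdot 4^{-k-1}\cdot 4^{k} = 1.75$ at $j=k$ — so the relevant range of $x$ in Lemma \ref{lem:(1+x)^l} has $b = 0.75$). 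Summing $\sum_j \zeta(2j)^\ell z^{2j}$ against $z^{2k+2}$ and using $\zeta(2j) \le \zeta(2) \le \pi^2/6$ plus Lemma \ref{bounds-lemma}(d) ($\sum \zeta(2j)4^{-j} = 1/2$) to control the tail, I would reduce the inequality $\mkl(\sqrt t) > 0$ to a statement of the shape $t^{k+1} > 2^\ell t^k \cdot \zeta(2) \cdot (\text{something} \le \frac{1}{\zeta(2)}\cdot 2(1 + 3d_\ell 4^{-k}))$, i.e. $t > 2^{\ell+1}(1 + 3d_\ell/4^k)$, which is exactly $t \ge U^2$. The delicate accounting is making the ``something'' bound hold uniformly in $k$ and $\ell$ with precisely the constant $1 + 3d_\ell/4^k$; I anticipate this requires splitting the sum into the $j = k$ term (contributing the $2(1 + \cdots)$ after dividing by $t^k$), the $j = k-1$ term, and the geometric-type tail $j \le k-2$ bounded via $\sum_{j\ge 1}\zeta(2j)(z^{-2})^{\cdots}$ and part (d), keeping careful track that $z^2 = t > 2$ so the tail is genuinely small. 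That bookkeeping, rather than any conceptual difficulty, is the crux.
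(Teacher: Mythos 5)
Your lower-bound argument is sound and is essentially the paper's: for $\ell$ odd, writing $t=(2q_1)^{\ell}$, the $j=k$ term of $\mkl(\sqrt t)=t^{k+1}+1-2^{\ell}\sum_{j=1}^k q_j^{\ell}t^{j}$ cancels $t^{k+1}$ exactly (since $q_k=q_1$), and keeping in addition only the $j=1$ term gives $\mkl(\sqrt t)\le t^{k+1}+1-t\cdot t^{k}-t\cdot t=1-t^{2}<0$, all discarded terms being negative. (Your parenthetical check ``$t^{k}\le 2^{\ell}q_1^{\ell}$'' is false for $k\ge 2$ and also unnecessary; the computation $1-t^{2}<0$ is all that is needed.) Together with Proposition \ref{l-odd-real-zero-prop} and the eventual positivity of $\mkl$, this yields $\beta_{k,\ell}>(2q_1)^{\ell/2}$, exactly as in the paper.

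The upper bound, which you yourself flag as the crux, is where the proposal has a genuine gap. The reduction you state, ``$t>2^{\ell+1}(1+3d_{\ell}/4^{k})$, which is exactly $t\ge U^{2}$,'' is not correct: the claimed endpoint satisfies $U^{2}=2^{\ell+1}\zeta(2)^{\ell-1}(1+3d_{\ell}/4^{k})$, and the factor $\zeta(2)^{\ell-1}$ cannot be dropped, since your own lower bound already forces $\beta_{k,\ell}^{2}>(2\zeta(2))^{\ell}$, so no threshold of size $2^{\ell+1}$ can make $\mkl$ positive; somewhere in your ``something $\le \frac{1}{\zeta(2)}\cdot 2(1+3d_{\ell}4^{-k})$'' the bound $\zeta(2j)^{\ell}\le\zeta(2)^{\ell}$ has silently become $\zeta(2)$ (and likewise $1+d_{\ell}4^{j-k-1}$ should read $1+3d_{\ell}4^{j-k-1}$). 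More importantly, the ``delicate accounting'' you defer is precisely the step the paper resolves by a specific device absent from your sketch: substitute $z^{2}=4+t$, use the symmetry $q_j=q_{k+1-j}$ to rewrite the sum as $\sum_{j=1}^{k}(2q_j)^{\ell}(4+t)^{-j}$, and replace the constant $1$ by $2\sum_{j\ge 1}\zeta(2j)4^{-j}$ via Lemma \ref{bounds-lemma}(d); positivity of $\mkl(\sqrt{4+t})$ then follows from the term-by-term inequalities $2\zeta(2j)4^{-j}>(2q_j)^{\ell}(4+t)^{-j}$, which, after Lemma \ref{bounds-lemma}(b), $\zeta(2j)\le\zeta(2)$, Lemma \ref{lem:(1+x)^l} with $b=0.75$, and $(1+a)^{1/j}\le 1+a/j$, reduce in the worst case $j=1$ to exactly $4+t>2^{\ell+1}\zeta(2)^{\ell-1}(1+3d_{\ell}/4^{k})=U^{2}$. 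Without this (or an equivalent) mechanism pairing the constant term against the sum, your plan of summing $\zeta(2j)^{\ell}z^{2j}$ against $z^{2k+2}$ and controlling a geometric tail is not shown to produce the precise constant in the statement, so as written the upper half of the theorem is not proved.
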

	\begin{proof}
		For $\ell$ odd and $q_1 = \frac{\zeta(2) \, \zeta(2k)}{\zeta(2k+2)}$, we get
		\[
		\mkl\Big(\Big(2q_1\Big)^{\ell/2}\Big)
		=\Big(2q_1\Big)^{\ell (k+1)}
		+1
		-2^{\ell}
		\sum_{j=1}^{k}
		q_j^{\ell} \, 
		\Big(2q_1\Big)^{\ell j}<0,
		\]
		as $q_k=q_1$ and $q_j>1$ for {$j=1,\ldots,k$}. On the other hand,
		\[
		\frac{\mkl(\sqrt{4+t})}{(4+t)^{k+1}}
		=1
		+\frac{1}{(4+t)^{k+1}}
		-2^{\ell}
		\sum_{j=1}^{k}
		q_j^{\ell} \, 
		(4+t)^{j-k-1}.
		\]
		Changing the variable $j$ to $k+1-j$ in the sum and using $q_j=q_{k+1-j}$, we get
		\[
		\frac{\mkl(\sqrt{4+t})}{(4+t)^{k+1}}
		=1
		+\frac{1}{(4+t)^{k+1}}
		-2^{\ell}
		\sum_{j=1}^{k}
		\frac{q_j^{\ell}}
		{(4+t)^{j}}.
		\]
		Now replacing $1$ by $2\sum_{j=1}^{\infty}{\zeta(2j)}/{4^j}$  using Lemma \ref{bounds-lemma}, equation \eqref{lem:identity of sum of even zeta values}, the above expression becomes
		\[
		\frac{\mkl(\sqrt{4+t})}{(4+t)^{k+1}}
		=2\sum_{j=k+1}^{\infty}\frac{\zeta(2j)}{4^j}
		+\frac{1}{(4+t)^{k+1}}
		+\sum_{j=1}^{k}
		\Bigg[
		\frac{2\zeta(2j)}{4^j}
		-\frac{(2q_j)^{\ell}}
		{(4+t)^{j}}
		\Bigg].     
		\]
		We want to find a $t$ for which $\mkl(\sqrt{4+t})>0$. Thus, it is sufficient to find $t$ for which
		\[
		\begin{split}
			&\frac{2\zeta(2j)}{4^j}
			>\frac{(2q_j)^{\ell}}
			{(4+t)^{j}}
			=\frac{(2\zeta(2j))^{\ell}}{(4+t)^{j}}
			\Bigg(
			\frac{\zeta(2k+2-2j)}{\zeta(2k+2)}
			\Bigg) ^{\ell}
			\;\;
			\text{for}\;\;       
			{ j=1,\ldots,k}.
		\end{split}
		\]
		Using the upper bound in Lemma \ref{bounds-lemma}, equation \eqref{lem:upper bound of quotient of zeta values}, the above inequality reduces to
		\[
		\Bigg(
		\frac{4+t}{4}
		\Bigg)^{j}
		>\lp 2\zeta(2) \rp ^{\ell-1}
		\Bigg(
		1+3\cdot4^{j-(k+1)}
		\Bigg)^{\ell}, \;\; \text{as}\;\; \zeta(2)>\zeta(2j)\;\;\text{for}\;\; {j=2,\ldots,k},
		\]
		or equivalently
		\[
		\Bigg(
		\frac{4+t}{4}
		\Bigg)^{j}
		>\lp 2\zeta(2) \rp ^{\ell-1}
		\Bigg(
		1+3 \, d_{\ell}\cdot4^{j-(k+1)}
		\Bigg),     
		\]
		where $d_{\ell}=\frac{(1.75)^{\ell}-1}{0.75}$ follows from Lemma \ref{lem:(1+x)^l} by putting $b=0.75$. Thus,
		\[
		\frac{4+t}{4}
		>\lp 2\zeta(2) \rp ^{\frac{\ell-1}{j}}
		\Bigg(
		1+3 \, d_{\ell}\cdot4^{j-(k+1)}
		\Bigg)^{1/j}. 
		\]
		For $a\ge 0$ and $0<\de\le 1$, we have $(1+a)^{\de}\le1+\de a$. we replace the above condition by 
		\[
		\frac{4+t}{4}
		>\lp 2\zeta(2) \rp ^{\frac{\ell-1}{j}}
		\Bigg(
		1
		+\frac{3 \, d_{\ell}\cdot4^{j-(k+1)}}{j}
		\Bigg). 
		\]
		The above lower bound is attained at $j=1$. Hence we obtain our final sufficient condition for $\mkl(\sqrt{4+t})$ to be positive, namely that
		\[
		4+t
		>2^{\ell+1}
		\zeta(2)^{\ell-1}
		\Bigg(
		1
		+\frac{3 \, d_{\ell}}{4^k}
		\Bigg).
		\]
		This proves the theorem.
	\end{proof}
	Note that in order to prove the upper bound on $\beta_{k,\ell}$, we did not use the parity of $\ell$. This bound also holds when $\ell$ is even.

	\medskip 
	
	\section{\bf Proof of Theorem \ref{main-thm-unit-circle}}
	
	\bigskip 
	
	To complete the proof of Theorem \ref{main-thm-unit-circle}, we now study the number of zeros of $R_{k,\ell}(z)$ on the unit circle. Once again, we do this by focusing on the polynomial $M_{k,\ell}(z)$ instead. The idea of the proof is to approximate $M_{k,\ell}(z)$ by a suitable trigonometric polynomial $A_{k,\ell}(z)$ and count the number of sign changes of the associated real-valued function on the unit circle.\\

	For positive integers $k$ and $\ell$ with $k \geq 3$, let 
	\[
	\akl(z)
	:=z^{2k+2} + (-1)^{(\ell+1)(k+1)}
	-(2q_1)^{\ell}
	\Bigg(
	z^{2k}+ (-1)^{(\ell+1)(k+1)}z^{2}
	\Bigg)\\
	- 2^{\ell}
	\sum_{j=2}^{k-1}
	(-1) ^{(\ell+1) (k+j)}
	z^{2j},
	\]
	and set 
	\begin{equation}\label{eq:def of Delta}
		\begin{split}
			\dkl(z)
			& := \mkl(z)-\akl(z)\\
			&= \, - \, 2^{\ell}
			\sum_{j=2}^{k-1}
			(-1) ^{(\ell+1) (k+j)} \,
			(q_j^{\ell}-1) \, 
			z^{2j}.
		\end{split}   
	\end{equation}
	
	\noindent We first establish that $A_{k,\ell}(z)$ is a `good' approximation for $M_{k,\ell}(z)$ on the unit circle.
	\begin{proposition}\label{delta-upper-bound-prop}
		For all $z \in \mathbb{C}$ such that $|z|=1$, 
		\begin{equation}\label{eq:abs of Delta}
			\big| {\dkl(z)} \big| < 2^{\ell} \, c_{\ell} \times 0.2762, \qquad  \text{with} \qquad 
			c_{\ell}=\frac{(1.306)^{\ell}-1}{0.306},
		\end{equation}
		for $k \geq 3$ and $\ell \geq 1$.
	\end{proposition}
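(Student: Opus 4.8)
The plan is to estimate $\dkl$ on the unit circle by the triangle inequality, reducing everything to a sharp numerical bound for a sum of the quantities $\varepsilon_{k,j}$ introduced in Lemma \ref{lem:(1+eps_j)^l}.

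First I would observe that, since $|z^{2j}|=1$ whenever $|z|=1$, the expression for $\dkl$ in \eqref{eq:def of Delta} together with the fact that $q_j>1$ for $1\le j\le k$ gives at once
\[
\big|\dkl(z)\big| \;\le\; 2^{\ell}\sum_{j=2}^{k-1}\big(q_j^{\ell}-1\big).
\]
Thus it suffices to show $\sum_{j=2}^{k-1}(q_j^{\ell}-1) < c_{\ell}\times 0.2762$ for all $k\ge 3$ and $\ell\ge 1$.

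Next I would bound each term $q_j^{\ell}-1$ in terms of $\varepsilon_{k,j}$. Writing $q_j=\zeta(2j)\cdot\zeta(2k+2-2j)/\zeta(2k+2)$ and using $\zeta(2k+2)>1$, we have $q_j<\zeta(2j)\,\zeta(2k+2-2j)$. Applying the upper bound of Lemma \ref{bounds-lemma}(a) to each of $\zeta(2j)$ and $\zeta(2k+2-2j)$ and expanding the product, one checks that $q_j-1<\varepsilon_{k,j}$, with $\varepsilon_{k,j}$ as in \eqref{epsilon-def-eqn}. Since $x\mapsto(1+x)^{\ell}$ is increasing and $q_j-1\ge 0$, Lemma \ref{lem:(1+eps_j)^l} then yields
\[
q_j^{\ell}-1 \;<\; (1+\varepsilon_{k,j})^{\ell}-1 \;\le\; c_{\ell}\,\varepsilon_{k,j}.
\]
Summing over $j$ we get $\big|\dkl(z)\big| < 2^{\ell}c_{\ell}\sum_{j=2}^{k-1}\varepsilon_{k,j}$, so the proposition reduces to the uniform estimate $\sum_{j=2}^{k-1}\varepsilon_{k,j} < 0.2762$.

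Finally I would prove this last inequality by splitting $\varepsilon_{k,j}$ into its three summands. The change of index $j\mapsto k+1-j$ maps $\{2,\dots,k-1\}$ to itself and interchanges the first two summands, so together they contribute
\[
2\sum_{j=2}^{k-1}\frac{2j+1}{2j-1}\,4^{-j} \;\le\; 2\sum_{j=2}^{\infty}\frac{2j+1}{2j-1}\,4^{-j} \;=\; \ln 3-\tfrac{5}{6} \;\approx\; 0.2653,
\]
where the evaluation uses $\tfrac{2j+1}{2j-1}=1+\tfrac{2}{2j-1}$ together with the power series for $\operatorname{arctanh}(1/2)$ (alternatively, one separates off enough leading terms and bounds the remaining geometric tail). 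For the third summand, Lemma \ref{bounds-lemma}(c) bounds each of the $k-2$ terms by $25/9$, and since $(k-2)\,4^{-(k+1)}$ is decreasing in $k$ with maximal value $1/256$ at $k=3$, this contributes less than $\tfrac{25}{9}\cdot\tfrac{1}{256}=\tfrac{25}{2304}\approx 0.0109$. Adding the two pieces gives $\sum_{j=2}^{k-1}\varepsilon_{k,j} < \ln 3-\tfrac{5}{6}+\tfrac{25}{2304} < 0.2762$, which finishes the argument. The only delicate point is this final numerical step: the value of the bound is only about $0.2761$, so one cannot afford to be wasteful in the tail estimate for the first sum — the crude bound $\tfrac{2j+1}{2j-1}\le\tfrac{5}{3}$ is not sharp enough on its own, and one must either evaluate the series in closed form or peel off enough of its leading terms before bounding the tail geometrically.
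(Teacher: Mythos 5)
Your proposal is correct and follows essentially the same route as the paper: triangle inequality, the bound $q_j<1+\varepsilon_{k,j}$ from Lemma \ref{bounds-lemma}(a), Lemma \ref{lem:(1+eps_j)^l} to pull out $c_{\ell}$, and then the symmetry $j\mapsto k+1-j$ together with the exact evaluation $2\sum_{j\ge 2}\frac{2j+1}{2j-1}4^{-j}=\log 3-\frac56$ and the $\frac{25}{9}(k-2)4^{-k-1}$ bound at $k=3$. Your closing remark about needing the closed form (rather than a crude tail bound) matches the paper's computation exactly.
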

	\begin{proof}
		We have that
		\begin{align*}
			\vt{\dkl(z)}
			&=\vt{ 
				-2^{\ell}
				\sum_{j=2}^{k-1}
				(-1) ^{(\ell+1) (k+j)}
				(q_j^{\ell}-1)
				z^{2j} 
			}\\
			&\le 
			2^{\ell}
			\sum_{j=2}^{k-1}     
			(q_j^{\ell}-1).
		\end{align*}
		Lemma \ref{bounds-lemma}, equation \eqref{lem:inequalities of zeta(n)} gives
		\begin{equation}
			\begin{split}
				q_j &<\zeta(2j)\zeta(2k+2-2j)\\
				&<1+\left(\frac{2j+1}{2j-1}\right) \, 4^{-j}
				+ \left(\frac{2k+3-2j}{2k+1-2j}\right) \, 4^{-k-1+j}
				+ \left(\frac{(2j+1)(2k+3-2j)}{(2j-1)(2k+1-2j)} \right) 4^{-k-1}\\
				&=1+\varepsilon_{k,j},
			\end{split}  
		\end{equation}
		where $\varepsilon_{k,j}$ is as in \eqref{epsilon-def-eqn}. Thus,
		\[
		\vt{\dkl(z)}
		<2^{\ell}
		\sum_{j=2}^{k-1}     
		\big( (1+\varepsilon_{k,j})^{\ell}-1 \big).
		\]
		Applying Lemma \ref{lem:(1+eps_j)^l}, we get that for $k\ge 3$,
		\[
		\begin{split}
			\vt{\dkl(z)}    
			&<2^{\ell} c_{\ell} \, 
			\sum_{j=2}^{k-1}
			\varepsilon_{k,j}\\
			&=2^{\ell} c_{\ell} \,
			\Bigg(
			2 \sum_{j=2}^{k-1}
			\left(\frac{2j+1}{2j-1} \right) \, 4^{-j} \,
			+ \, \sum_{j=2}^{k-1}
			\left(\frac{(2j+1)(2k+3-2j)}{(2j-1)(2k+1-2j)}\right) \, 4^{-k-1}
			\Bigg),
		\end{split} 
		\]
		where $c_{\ell}=\frac{(1.306)^{\ell}-1}{0.306}$. Now using the bounds from Lemma \ref{bounds-lemma}, equation \eqref{lem:max valu of frac{(2j+1)(2k+3-2j)}{(2j-1)(2k+1-2j)}}, we get
		\begin{equation}
			\begin{split}
				\vt{\dkl(z)}
				&<2^{\ell} c_{\ell} \,
				\left(
				2 \left[ \sum_{j=2}^{\infty}
				\Bigg(
				1+
				\frac{2}{2j-1} 
				\Bigg) 4^{-j} \right]
				+ \frac{25 \, (k-2) \, 4^{-k-1}}{9}
				\right)\\
				&=2^{\ell} c_{\ell} \, 
				\Bigg(
				\log 3-\frac 56
				+ \frac{25 \, (k-2) \, 4^{-k-1}}{9}
				\Bigg),
			\end{split}
		\end{equation}
		since
		\[
		1
		+4 \sum_{j=2}^{\infty}  
		\frac{1}{2j-1} 
		4^{-j} = \log 3.
		\]
		The claim follows by taking $k \geq 3$.    
	\end{proof}

	\noindent We now investigate the behaviour of $\mkl(z)$ and $\akl(z)$ on the unit circle, by setting $z=e^{i\th}$. For $\th \in [0, \, 2\pi]$, let 
	\begin{equation*}
		F_{k,\ell}(\th)=
		\begin{cases}
			-i \, e^{-i(k+1)\th} \mkl(e^{i\th}) & \text{ if } k \text{ and } \ell \text{ are both even},\\
			e^{-i(k+1)\th} \mkl(e^{i\th}) & \text{ otherwise.}
		\end{cases}
	\end{equation*}
	The reciprocal nature of $\mkl(z)$ implies that the function $F_{k,\ell}(\th)$ is real-valued. Similarly, let
	\begin{equation*}
		\fkl(\th) =
		\begin{cases}
			-i \, e^{-i(k+1)\th} \, \akl(e^{i \th}) & \text{ if } k \text{ and } \ell \text{ are both even},\\
			e^{-i(k+1)\th} \, \akl(e^{i \th}) & \text{ otherwise.}
		\end{cases}
	\end{equation*}
	By the symmetry of coefficients, it is evident that $\fkl(\th)$ is also real-valued. Proposition \ref{delta-upper-bound-prop} is equivalent to the statement that 
	\begin{equation}\label{eqn-diff-F-f-bound}
		| F_{k,\ell}(\th) - \fkl(\theta)| < 2^{\ell} \, c_{\ell} \, \times \, 0.2762, \quad  \text{with} \quad 
		c_{\ell}=\frac{(1.306)^{\ell}-1}{0.306}.
	\end{equation}
	
	\noindent We use this to prove the theorem below.
	\begin{theorem}\label{aux-thm-complex-zeros}
		For positive integers $k$ and $\ell$ with $k \geq 3$, the polynomial $\mkl(z)$ has at least $2k-2$ zeros on the unit circle.
	\end{theorem}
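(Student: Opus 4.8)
The plan is to show that the auxiliary function $\fkl(\th)$ already oscillates enough on $[0,2\pi]$ to produce at least $2k-2$ sign changes, and then to transfer these sign changes to $F_{k,\ell}(\th)$ using the uniform bound \eqref{eqn-diff-F-f-bound}; since $F_{k,\ell}(\th)$ is (up to a unit factor) $e^{-i(k+1)\th}\mkl(e^{i\th})$, each sign change of $F_{k,\ell}$ on $(0,2\pi)$ yields a zero of $\mkl$ on the unit circle, and distinct sign changes give distinct zeros. First I would write $\fkl(\th)$ explicitly as a real trigonometric polynomial. From the definition of $\akl(z)$ and the normalization by $e^{-i(k+1)\th}$, the terms $z^{2k+2}$ and $(-1)^{(\ell+1)(k+1)}$ combine into something proportional to $\cos((k+1)\th)$ or $\sin((k+1)\th)$, the pair $z^{2k}+(-1)^{(\ell+1)(k+1)}z^2$ contributes a term proportional to $(2q_1)^\ell\cos((k-1)\th)$ (or sine), and the middle sum $\sum_{j=2}^{k-1}(-1)^{(\ell+1)(k+j)}z^{2j}$ becomes $2\sum_{j=2}^{k-1}(\pm 1)\cos((k-j)\th)$ after normalization, i.e. a sum of cosines of frequencies $1,2,\ldots,k-2$ with signs $\pm 1$ that alternate in a controlled way (depending on the parity of $\ell+1$). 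The key point is that $\fkl(\th)$ has the shape: one term of large amplitude $(2q_1)^\ell$ at frequency $k-1$, a bounded-amplitude term at frequency $k+1$, plus $2^\ell$ times a sum of $\pm\cos((k-j)\th)$ terms with unit amplitude each and frequencies at most $k-2$.

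The central idea — following \cite{msw} — is to evaluate $\fkl(\th)$ at the $2k$ equally spaced points $\th_m = \tfrac{m\pi}{2k}$ or a similar grid adapted to the dominant frequency $k-1$, say at points where $\cos((k-1)\th)$ alternates between $+1$ and $-1$. Because the leading term carries amplitude $(2q_1)^\ell = (2\zeta(2)\zeta(2k)/\zeta(2k+2))^\ell \ge (2\zeta(2))^\ell$ (using Lemma \ref{bounds-lemma}(a), since $q_1 > \zeta(2)$), which grows like $(2\zeta(2))^\ell \approx (3.29)^\ell$, while the competing sum $2^\ell\sum_{j=2}^{k-1}(\pm 1)$ has total magnitude at most $2^\ell(k-2)$ and — more importantly — after telescoping the alternating-sign structure one gets a bound far smaller than $(2q_1)^\ell$ at the chosen evaluation points. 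Thus at these $\approx 2k$ grid points the sign of $\fkl(\th_m)$ is governed by the sign of the dominant cosine term, so $\fkl$ alternates sign $2k-2$ times (we lose a bounded number of sign changes near $\th=0,\pi,2\pi$ where the dominant term has extrema of the same sign, and possibly near the frequency-$(k+1)$ interference, accounting for the deficit from $2k$ to $2k-2$). I would make this precise by showing $|\fkl(\th_m)| > 2^\ell c_\ell \times 0.2762$ at each chosen grid point, so that by \eqref{eqn-diff-F-f-bound} the function $F_{k,\ell}$ has the same sign as $\fkl$ at $\th_m$; hence $F_{k,\ell}$ itself has at least $2k-2$ sign changes on $(0,2\pi)$, giving $2k-2$ distinct zeros of $\mkl$ on $|z|=1$.

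The main obstacle will be establishing the lower bound $|\fkl(\th_m)| > 2^\ell c_\ell \times 0.2762$ at the grid points \emph{uniformly in $\ell$}. One must check that the dominant term $(2q_1)^\ell \gtrsim (3.29)^\ell$ (times a trigonometric value bounded away from $0$ at the grid) dominates both $2^\ell c_\ell \times 0.2762$ — note $c_\ell \approx (1.306)^\ell/0.306$, so $2^\ell c_\ell \sim (2.612)^\ell/0.306$, genuinely smaller in exponential growth rate than $(3.29)^\ell$ — and the residual trigonometric sum $2^\ell \cdot O(1)$ coming from the middle terms evaluated at the grid. This requires choosing the grid points carefully so that $\cos((k-1)\th_m)$ is not merely $\pm 1$ but stays bounded away from $0$ while simultaneously the interfering frequency-$(k+1)$ and lower-frequency contributions stay small; the ratio of frequencies $(k+1)/(k-1) \to 1$ makes the two leading frequencies nearly resonant, so some care (e.g. grouping the $z^{2k+2}$, $z^{2k}$, $z^2$ terms together before estimating, as is already done in the definition of $\akl$) is needed to avoid destructive cancellation at the sample points. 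I expect the bookkeeping of signs $(-1)^{(\ell+1)(k+j)}$ across the four parity cases for $(k,\ell)$ to be the most tedious part, handled exactly as in the case analysis of \cite{msw} but carrying the extra exponent $\ell$ through each inequality.
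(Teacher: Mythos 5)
Your strategy is the one the paper uses: approximate $\mkl$ by $\akl$, transfer sign information through the bound \eqref{eqn-diff-F-f-bound}, sample $\fkl$ at the points adapted to the dominant frequency $k-1$ (namely $\th_j=j\pi/(k-1)$, shifted by half a step when $k$ and $\ell$ are both even), and win because $2(2q_1)^{\ell}-2^{\ell}-2$, with $q_1>\zeta(2)$, exceeds $2^{\ell}c_{\ell}\times 0.2762$. What you call ``telescoping'' is made exact in the paper: summing the middle geometric series produces a term of the form $2^{\ell}\sin((k-1)\th)\cot\th$ (or $\tan\th$), which vanishes identically at the chosen grid, so that $\fkl(\th_j)=(-1)^j\bigl(2\cos 2\th_j-2(2q_1)^{\ell}+2^{\ell}\bigr)$ with no further estimation needed.

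The genuine gap is your handling of the exceptional sample points, namely those where the middle block of $\akl$ resonates: $\th=0,\pi$ when $\ell$ is odd (i.e.\ $z=\pm1$), and $\th=\pi/2,3\pi/2$ when $\ell$ is even (i.e.\ $z=\pm i$). At these points the low-amplitude terms pile up to size $2^{\ell}(k-2)$, which for large $k$ dwarfs $(2q_1)^{\ell}$, so your justification that ``the sign is governed by the dominant cosine term'' fails exactly there; and in the even-$\ell$ cases the closed form of $\fkl$ is singular there and must be evaluated as a limit. Nor can you simply discard these points, as your parenthetical suggests: removing $\th=0,\pi,2\pi$ (or $\pi/2,3\pi/2$) breaks the alternation into blocks and costs about four sign changes, leaving only roughly $2k-6$ guaranteed zeros on the circle, which is not enough to close the degree count in the proof of Theorem \ref{main-thm-unit-circle}. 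The paper retains these points: for $\ell$ odd it invokes the separately established inequality $\mkl(\pm1)<0$ (proof of Proposition \ref{l-odd-real-zero-prop}), and for $\ell$ even it computes $\fkl(\pi/2)$ and $\fkl(3\pi/2)$ as limits, obtaining values of magnitude at least $2^{\ell}(k-1)$ whose signs continue the alternation. Supplying these evaluations (or an equivalent argument at the resonant points) is the missing step; the remaining work is the three-way parity bookkeeping you already anticipate.
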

	\begin{proof}
		It suffices to show that real-valued function $F_{k,\ell}(\theta)$ changes sign at least $2k-1$ times in $[0, \, 2\pi]$. We establish this in three different cases, depending on the parity of $\ell$ and $k$. \\
		
		\noindent First, note that $\akl(z)$ can be expressed as
		\begin{align*}
			\akl(z) & = z^{2k+2} + (-1)^{(\ell+1)(k+1)}
			-(2q_1)^{\ell}
			\Bigg(
			z^{2k}+ (-1)^{(\ell+1)(k+1)}z^{2}
			\Bigg) \\
			& \hspace{5.5cm} - (-1)^{k(\ell+1)} \,
			2^{\ell} \, 
			z^4 \, 
			\sum_{j=0}^{k-3}
			{\left( (-1) ^{(\ell+1)}
				z^{2} \right)}^j \\
			& = z^{2k+2} + (-1)^{(\ell+1)(k+1)}
			-(2q_1)^{\ell}
			\Bigg(
			z^{2k}+ (-1)^{(\ell+1)(k+1)}z^{2}
			\Bigg) \\ 
			& \hspace{5.5cm} - (-1)^{k(\ell+1)} \,
			2^{\ell} \,
			z^4 \,
			\left(\frac{(-1)^{(\ell+1)(k-2)}z^{2k-4}-1}{\left((-1)^{(\ell +1)}z^2\right)-1}\right).
		\end{align*}
		Multiplying both side by $z^{-(k+1)}$ gives
		\begin{align*}
			z^{-(k+1)}\akl(z)
			& =z^{k+1}
			+(-1)^{(\ell+1)(k+1)} z^{-(k+1)}
			-(2q_1)^{\ell}
			\Bigg(
			z^{k-1}+ (-1)^{(\ell+1)(k+1)}z^{-(k-1)}
			\Bigg)\\
			& \hspace{4.5cm} - (-1)^{k(\ell+1)} \,
			2^{\ell} \,
			\left( \frac{(-1)^{(\ell+1)(k-2)}z^{k-2} - z^{-(k-2)}}{\left({(-1)}^{\ell + 1}z \right) - z^{-1}} \right).
		\end{align*}
		
		\begin{enumerate}
			\item[(i)] $\ell$ is odd: The function $\fkl(\th)$ can be simplified using the standard trigonometric identities as follows. 
			\begin{align*}
				\fkl(\th)
				&=2\cos(k+1)\th
				-2 \, (2q_1)^{\ell} \,
				\cos(k-1)\th
				-2^{\ell} \, \left(
				\frac{\sin(k-2)\th}{\sin \th} \right)\\
				&=2 \, 
				\bigg(
				\cos(k-1)\th \, \cos 2\th
				- \sin(k-1)\th \, \sin 2\th
				\bigg)
				-2 \, (2q_1)^{\ell} \,
				\cos(k-1)\th\\
				& \hspace{6cm}-2^{\ell} \,
				\bigg(
				\sin(k-1)\th \, \cot \th
				-\cos(k-1)\th
				\bigg)\\
				&=\cos(k-1)\th \,
				\left( 
				2\cos 2\th
				-2(2q_1)^{\ell}
				+2^{\ell}
				\right)
				-\sin(k-1)\th \,
				\left(
				2\sin 2\th
				+2^{\ell}
				\cot \th
				\right).
			\end{align*}
			Choose $\th_j=\frac{j\pi }{k-1}$, for {$j =1,\ldots, 2k-3$}, $j \neq k-1$. Then $0< \th_j <2\pi$ and
			\begin{equation}\label{eq:flk at theta_j for l odd}
				\fkl(\th_j)
				=(-1)^j
				\Bigg(
				2\cos \lp \frac{2\pi j}{k-1} \rp
				-2(2q_1)^{\ell}
				+2^{\ell}
				\Bigg).
			\end{equation}
			Note that for each {$j=1, \ldots, 2k-3$} and $j \neq k-1$, 
			\begin{equation}\label{eq:sign of fn for l odd}
				2\cos \lp \frac{2\pi j}{k-1} \rp
				-2(2q_1)^{\ell}
				+2^{\ell}
				\,\, < \,\, 2
				-2(2q_1)^{\ell}
				+2^{\ell}
				\,\, < \,\, 0,
			\end{equation}
			since $q_1>1$ and $\ell\ge 1$.
			Using \eqref{eq:sign of fn for l odd} in \eqref{eq:flk at theta_j for l odd}, we deduce that
			$\fkl(\th_j)$ has sign $(-1)^{j+1}$ for $0 < j\le 2k-3$. Additionally, for $0 < j\le 2k-3$, $j \neq k-1$,
			\[
			\abs{\fkl (\th_j)}
			=\vt
			{
				2\cos \lp \frac{2\pi j}{k-1} \rp
				-2(2q_1)^{\ell}
				+2^{\ell}
			}
			\,\, \ge \,\,  2(2q_1)^{\ell}
			-2^{\ell} -2,
			\]
			which can easily be checked to be 
			\begin{equation*}
				> 2^{\ell} \, c_{\ell} \, \times \, 0.2762,
				\quad  \text{with} \quad 
				c_{\ell}=\frac{(1.306)^{\ell}-1}{0.306}
			\end{equation*}
			as $q_1 > \zeta(2) > 1.644$. Therefore, by Proposition \ref{delta-upper-bound-prop}, we obtain that $F_{k,\ell}(\theta_j)$ has sign $(-1)^{j+1}$ for $0 < j < 2k-3$ and $j \neq k-1$. Moreover, if $\theta_0 := 0$ and $\theta_{k-1} := \pi$, that is $z=\pm1$, then we have already shown in the proof of Proposition \ref{l-odd-real-zero-prop} that $M_{k,\ell}(1) = M_{k,\ell}(-1) < 0$. Thus, the continuous function $F_{k,\ell}(\theta)$ has sign $(-1)^{j+1}$ for $0 \leq j \leq 2k-3$, and hence, changes sign at least $2k-1$ times in $[0, \,2\pi]$, which proves the result. \\
			
			\item[(ii)] $\ell$ is even and $k$ is odd: In this case, the function $\fkl(\th)$ becomes
			\begin{align*}
				\fkl(\theta) = \cos(k-1)\th \,
				\bigg(
				2\cos 2\th
				-2(2q_1)^{\ell}
				+2^{\ell}
				\bigg)
				\, \, - \, \, \sin(k-1)\th \,
				\bigg(
				2\sin 2\th
				-2^{\ell}
				\tan \th
				\bigg).
			\end{align*}
			Once again, choose $\th_j=\frac{j\pi }{k-1}$, {$j=0,\ldots, 2k-3$}. For  $j \neq (k-1)/2$, $3(k-1)/2$, using a similar argument as above, we conclude that $f_{k,\ell}(\theta_j)$ has sign $(-1)^{j+1}$ at the above values of $j$. For $\theta_{(k-1)/2} = \pi / 2$, we have 
			\begin{align*}
				\fkl(\pi/2) & = \cos \left( \frac{(k-1) \pi}{2} \right) \, \bigg( 2^{\ell} - 2 {(2 q_1)}^{\ell} -2 \bigg) + 2^{\ell} \, \lim_{\theta \rightarrow \pi/2} \frac{\sin (k-1)\theta}{\cos \theta} \\
				& = \, - \, \cos \left( \frac{(k-1) \pi}{2} \right)  \,  \bigg( 2 (2q_1)^{\ell} - 2^{\ell} +2 \bigg) - 2^{\ell}  \, \left( \frac{(k-1) \, \cos \frac{(k-1)\pi}{2}}{\sin \frac{\pi}{2}} \right) \\
				& = \, {(-1)}^{\left(\frac{k-1}{2}\right)+1} \, \bigg[  \bigg( 2 (2q_1)^{\ell} - 2^{\ell} +2 \bigg) \, + \, 2^{\ell} (k-1) \bigg],
			\end{align*}
			which has the same sign as ${(-1)}^{\left(\frac{k-1}{2}\right)+1}$ as the term inside square brackets is positive. Similarly, we have for $\theta_{3(k-1)/2} = 3 \pi /2$, we get
			\begin{align*}
				\fkl(3\pi/2) 
				=  \, {(-1)}^{\left(\frac{3(k-1)}{2}\right)+1} \, \bigg[ \bigg( 2 (2q_1)^{\ell} - 2^{\ell} +2 \bigg) + 2^{\ell} (k-1) \bigg] ,
			\end{align*}
			which has the same sign as $(-1)^{\left(\frac{3(k-1)}{2}\right)+1}$. Therefore, by Proposition \ref{delta-upper-bound-prop}, we obtain that $F_{k,\ell}(\theta)$ changes sign at least $2k-1$ times in $[0,\, 2\pi]$.\\
			
			\item[(iii)] $\ell$ and $k$ are both even: In this scenario, the simplified form of $\fkl(\th)$ is 
			\begin{align*}
				\fkl(\th) =\sin(k-1)\th \,
				\bigg(
				2\cos 2\th
				-2(2q_1)^{\ell}
				+2^{\ell}
				\bigg)
				\,\, + \,\, \cos(k-1)\th \,
				\bigg(
				2\sin 2\th
				-2^{\ell}
				\tan \th
				\bigg).
			\end{align*}
			Here, we choose $\th_j=\frac{(2j-1)\pi}{2(k-1)}$,{ $j=1,\ldots, 2k-2$}. For $j \neq k/2$ and $(3k/2)-1$, earlier arguments go through verbatim and imply that $f_{k,\ell}(\theta_j)$ has sign ${(-1)}^{j}$. At $j = k/2$, $\theta_{k/2} = \pi/2$ and 
			\begin{align*}
				\fkl(\pi/2) & = \, - \, \sin \left( \frac{(k-1)\pi}{2} \right) \bigg( 2{(2q_1)}^{\ell} - 2^{\ell} + 2 \bigg) - 2^{\ell} \lim_{\theta \rightarrow \pi/2} \frac{\cos (k-1)\theta}{\cos \theta} \\
				& = \, - \, \sin \left( \frac{(k-1)\pi}{2} \right) \bigg( 2{(2q_1)}^{\ell} - 2^{\ell} + 2 \bigg) - 2^{\ell}  \, \left( \frac{(k-1) \, \sin \frac{(k-1)\pi}{2}}{\sin \frac{\pi}{2}} \right) \\
				& = \, {(-1)}^{\frac{k}{2}} \, \bigg[ \bigg( 2 (2q_1)^{\ell} - 2^{\ell} +2 \bigg) + 2^{\ell} (k-1) \bigg],
			\end{align*}
			which has the same sign as ${(-1)}^{\frac{k}{2}}$. Likewise, for $j = (3k/2)-1$ and $\theta_{(3k/2)-1}= 3\pi/2$, we have
			\begin{align*}
				\fkl(3\pi/2) 
				= \, {(-1)}^{\frac{3k}{2}-1} \, \bigg[ \bigg( 2 (2q_1)^{\ell} - 2^{\ell} +2 \bigg) + 2^{\ell} \, 3 \, (k-1) \bigg],
			\end{align*}
			which has the sign ${(-1)}^{\frac{3k}{2}-1}$. Therefore, Proposition \ref{delta-upper-bound-prop} implies that $F_{k,\ell}(\theta)$ has at least $2k-1$ sign changes in $[0, \, 2\pi]$.
		\end{enumerate}
		This proves the claim.
	\end{proof}
	
	\noindent We put everything together to complete the proof of Theorem \ref{main-thm-unit-circle} below.
	\begin{proof}[Proof of Theorem \ref{main-thm-unit-circle}]
		Note that all zeros of $R_{k,\ell}(z)$ arise as squares of zeros of $M_{k,\ell}(z)$. First consider $k=1$, so that $R_{1,\ell}(z)$ is a quadratic polynomial. If $\ell$ is odd, using Proposition \ref{l-odd-real-zero-prop}, we see that $R_{1,\ell}(z)$ must have two real zeros of the form $\alpha_{1,\ell} = \beta_{1,\ell}^2$ and $\alpha_{k,\ell}^{-1} = \beta_{1,\ell}^{-2}$. If $\ell$ is even, Proposition \ref{l-even-real-zero-prop}(a) implies that $R_{k,\ell}(z)$ must have at least two real zeros, $\alpha_{1,\ell} = \beta_{1,\ell}^2$ and $\alpha_{1,\ell}^{-1} = \beta_{1,\ell}^{-2}$. Since $\deg R_{1,\ell}(z) = 2$, this proves the claim. \\
		
		Now suppose that $k=2$. By Proposition \ref{plus-minus-1-as-zero-prop}, we know that $+1$ or $-1$ is a zero of $R_{2,\ell}(z)$ according to whether $\ell$ is even or odd. Furthermore, Proposition \ref{l-odd-real-zero-prop} and Proposition \ref{l-even-real-zero-prop}(b) together imply that $R_{2,\ell}(z)$ has at least two real zeros of the form $\alpha_{2,\ell} = \beta_{2,\ell}^2 > 1$ and $\alpha_{2,\ell}^{-1} = \beta_{2,\ell}^{-2}$. As $\deg R_{2,\ell}(z) = 3$, the above argument proves the theorem when $k=2$.\\
		
		For $k \geq 3$, from Proposition \ref{l-odd-real-zero-prop} and \ref{l-even-real-zero-prop}, we deduce that $R_{k,\ell}(z)$ has at least two positive real zeros of the form $\alpha_{k,\ell}$ and $1/\alpha_{k,\ell}$. Combining this with Theorem \ref{aux-thm-complex-zeros} and comparing degrees completes the proof of the result about the location as well as simplicity of the zeros. 
	\end{proof}
	
	\medskip 
	
	\section{\bf Concluding remarks}
	
	\bigskip 
	
	Let $f(x) = a_0 \, x^m + a_1 \, x^{m-1} + \cdots + a_m \in \mathbb{C}[x]$ with $a_0 \neq 0$. Then the discriminant of $f$ is $\text{Disc}(f):= a_0^{2m-2} \prod_{1 \leq i < j \leq m} {\left( \alpha_i - \alpha_j \right)}^2$, where {$\alpha_1$, $\alpha_2$, $\ldots$, $\alpha_m$} are all the zeros of $f(x)$. A related quantity is the Mahler measure of $f$ and is given by
	\begin{equation*}
		M(f) := |a_0| \, \prod_{j=1}^m \max \left\{1, \, |\alpha_j| \right\}. 
	\end{equation*}
	Then Mahler \cite{mah} showed that
	\begin{equation*}
		|\text{Disc}(f)| \leq m^m \, M(f)^{2m-2}.
	\end{equation*}
	In case of the polynomials $R_{k,\ell}(x)$, Theorem \ref{main-thm-unit-circle} implies that
	\begin{equation*}
		M \left( R_{k,\ell} \right) = |a_{0,k,\ell}| \, |\alpha_{k,\ell}|,
	\end{equation*}
	where $a_{0,k,\ell}$ is the leading coefficient of $R_{k,\ell}(x)$ and $\alpha_{k,\ell}$ is its largest real zero. Therefore, we deduce that
	\begin{equation*}
		{\left( {(k+1)}^{-(k+1)} \, \prod_{1 \leq i < j \leq k+1} {\left( \alpha_{i,k,\ell} - \alpha_{j,k,\ell} \right)}^2 \right)}^{1/2k} \leq |\alpha_{k,\ell}|.
	\end{equation*}
	Thus, using the remark after Theorem \ref{real-zero-location-aux-thm}, we obtain that the largest real zero $\alpha_{k,\ell}$ always lies in the interval
	\begin{equation*}
		 \left(  \ {\left( {(k+1)}^{-(k+1)} \, \prod_{1 \leq i < j \leq k+1} {\left( \alpha_{i,k,\ell} - \alpha_{j,k,\ell} \right)}^2 \right)}^{1/2k}, \, \, \,  2^{\ell + 1} \, \zeta(2)^{\ell - 1} \, \left( 1 + \frac{3 \, d_{\ell}}{4^k}\right) \right),
	\end{equation*}
	where 
	\(
	d_{\ell}
	=\frac{(1.75)^{\ell}-1}{0.75}.
	\)
	Here $\alpha_{j,k,\ell}$ denote the zeros of $R_{k,\ell}(x)$. However, determining the size of the lower bound above does not appear straightforward. \\
	
	On another note, since the Bernoulli numbers are rational, the zeros of $R_{k,\ell}(z)$ are algebraic numbers, and their algebraic properties warrant further study. For instance, the irreducibility of $R_{k,\ell}(z)$ over $\mathbb{Q}$ when $k$ is odd and that of $R_{k,\ell}(z)/(z - {(-1)}^{\ell})$ when $k$ is even, is a natural question. Numerical evidence suggests that this is indeed the case. Note that a proof of such a fact would entail the irrationality of the real zero $\alpha_{k,\ell}$ of $R_{k,\ell}(z)$. A related problem is to identify whether any of the zeros of $R_{k,\ell}(z)$ are roots of unity. Preliminary computations suggest that except for $\pm 1$, the other zeros of $R_{k,\ell}(z)$ are not roots of unity for $\ell > 1$, but a proof of this fact seems elusive.\\
	
	A real algebraic integer $\alpha>1$, all of whose Galois conjugates except $\alpha$ lie inside $|z| \leq 1$, with at least one on the boundary $|z|=1$, is called a {\it Salem number}. These numbers are well-studied in the context of Diophantine approximation and related areas in number theory (see the survey \cite{s}). If we assume the irreducibility of $R_{k,\ell}(z)$ alluded to earlier, then for $k \geq 4$, the real zero $\alpha_{k,\ell}$ has properties similar to that of a Salem number, but is an algebraic \textit{number} instead of an algebraic \textit{integer}. It may be of interest to explore whether such numbers also possess significant Diophantine properties. We relegate the study of all these questions to future work. \\

	\medskip 
	
	\section*{Acknowledgments}
	We thank Prof. M. Ram Murty and the referees for helpful comments on an earlier version of this paper.

\end{document}